\documentclass[11pt]{article}
\usepackage{amsthm}
\usepackage{amsmath}
\usepackage{amssymb}
\usepackage{enumerate}
\usepackage{epsfig}
\usepackage{paralist}

\usepackage[dvipsnames,usenames]{color}

%%%%%%%%%%%%%%%%%%%%%%%%%%%%%%%%%
% Inicialitzacio de comptadors. %
%%%%%%%%%%%%%%%%%%%%%%%%%%%%%%%%%

\setcounter{page}{1} \setcounter{section}{0} \setcounter{subsection}{0}
%\pagestyle{empty}
%%%%%%%%%%%%%%%%%%%%%%%%
% Diseny de la pagina. %
%%%%%%%%%%%%%%%%%%%%%%%%

\setlength{\topmargin}{-1cm} \setlength{\oddsidemargin}{-0.cm}
\setlength{\evensidemargin}{-0.cm} \setlength{\textheight}{8.6 in}
\setlength{\textwidth}{6.3 in}

 % <- espai interlinear.

%%%%%%%%%%%%%%%%%%%%%%%%%%%%%%%%%%%%
% Nous comandaments per enunciats. %
%%%%%%%%%%%%%%%%%%%%%%%%%%%%%%%%%%%%

\newcounter{contador}
\newtheorem{propo}[contador]{Proposition}
\newtheorem{teo}[contador]{Theorem}
\newtheorem{lem}[contador]{Lemma}

\newtheorem{defi}[contador]{Definition}
\newtheorem{corol}[contador]{Corollary}

%\newtheorem{example}[contador]{Example}

%%%%%%%%%%%%%%%%%%%%%%%%%%%%%%
% Nous comandaments d'estil. %
%%%%%%%%%%%%%%%%%%%%%%%%%%%%%%

 % NOVA P+GINA
 % NOVA L-NIA
 % UNA MICA D'ESPAI
 % UNA MICA M+S D'ESPAI
 % NO INDENTAR DESPR+S D'UN PAR+GRAF
 % DEMOSTRACI\'{O}
 % FINAL D'UNA DEMOSTRACI\'{O}
 % FINAL D'UNA DEMOSTRACI\'{O}
 % FORMULES GRANS
%\renewcommand{\thefootnote}{\fnsymbol{footnote}}

%%%%%%%%%%%%%%%%%%%%%%%%%%%%%%
% Nous operadors matematics. %
%%%%%%%%%%%%%%%%%%%%%%%%%%%%%%

\newcommand{\sign}{{\rm sign}}

 % PER SER UTILITZAR EN UN TEXT
 % PER SER UTILITZAR EN UNA F\`{O}RMULA

%%%%%%%%%%%%%%%%%%%%%%%%%%%%%%%%%%%%
% Comandes "Blackboard". %
%%%%%%%%%%%%%%%%%%%%%%%%%%%%%%%%%%%%

\newcommand{\R}{{\mathbb R}}

\newcommand{\N}{{\mathbb N}}

\newcommand{\Z}{{\mathbb Z}}
%%%%%%%%%%%%%%%%%%%%%%%%%%%%%%%%%%%%%%%%%
% Comandes caligr\`{a}fiques per a entorns. %
%%%%%%%%%%%%%%%%%%%%%%%%%%%%%%%%%%%%%%%%%

%%%%%%%%%%%%%%%%%%%%%%%%%%%%%%%%%%%%
% En cas d'article: Titol i autor. %
%%%%%%%%%%%%%%%%%%%%%%%%%%%%%%%%%%%%
\title{Basin of attraction of  triangular maps with applications
\footnote{The authors are supported by
Ministry of Economy and Competitiveness grants MTM2008-03437 (first
and second authors); DPI2011-25822 (third author). GSD-UAB and
CoDALab Groups are supported by the Government of Catalonia through
the SGR program. }}
\author{Anna Cima$^{(1)}$, Armengol Gasull$^{(1)}$ and V\'{\i}ctor
Ma\~{n}osa$^{(2)}$
\\*[.1truecm]
{\small \textsl{$^{(1)}$ Dept. de Matem\`{a}tiques, Facultat de Ci\`{e}ncies,}}
\\*[-.25truecm] {\small \textsl{Universitat Aut\`{o}noma de Barcelona,}}
\\*[-.25truecm] {\small \textsl{08193 Bellaterra, Barcelona, Spain}}
\\*[-.25truecm] {\small \textsl{cima@mat.uab.cat, gasull@mat.uab.cat}}
\\*[-.25truecm]
\\*[-.25truecm] {\small \textsl{$^{(2)}$ Dept. de Matem\`{a}tica Aplicada III (MA3),}}
\\*[-.25truecm] {\small \textsl{Control, Dynamics and Applications Group (CoDALab)}}
\\*[-.25truecm] {\small \textsl{Universitat Polit\`{e}cnica de Catalunya (UPC)}}
\\*[-.25truecm] {\small \textsl{Colom 1, 08222 Terrassa, Spain}}
\\*[-.25truecm] {\small \textsl{victor.manosa@upc.edu}}}

\begin{document}
\maketitle

\begin{abstract}
We consider planar triangular maps
$x_{n+1}=f_0(u_n)+f_1(u_n)x_n,u_{n+1}=\phi(u_n)$. These  maps
preserve the fibration of the plane given by
$\mathcal{F}=\{\phi(u)=c, c\in \mathrm{Image}(\phi)\}$.  We assume
that there exists an invariant attracting fiber $\{u=u_*\}$ for the
dynamical system generated by $\phi$ and we study the limit dynamics
of those points in the basin of attraction of this invariant fiber,
assuming that either it contains a global attractor, or it is filled
by fixed or $2$-periodic points. Finally, we apply our results to a
variety of examples, from particular cases of triangular systems to
some planar quasi-homogeneous maps, and some multiplicative and
additive difference equations, as well.
\end{abstract}

\bigskip

\noindent {\sl 2000 Mathematics Subject Classification:}
\texttt{39A10, 39A11, 39A20}

\bigskip

\noindent {\sl Keywords}: Attractors; difference equations; discrete
dynamical systems; triangular maps; periodic solutions;
quasi-homogeneous maps. \newline

\newpage
\section{Introduction}

In this paper we consider triangular systems of the form
\begin{equation}\label{problema1}
\left\{
  \begin{array}{ll}
    x_{n+1} & = f_0(u_n)+f_1(u_n)\,x_n, \\
    u_{n+1} & =\phi(u_n),
  \end{array}
\right.
\end{equation}
where $\{x_n\}$ and $\{u_n\}$ are real sequences, and $f_0$, $f_1$
and $\phi$ are continuous functions.

Observe that  system (\ref{problema1}) preserves the fibration of
the plane given by $\mathcal{F}=\{\phi(u)=c, c\in
\mathrm{Image}(\phi)\}$, that is, it sends fibers of $\mathcal{F}$
to other ones. We will assume that there exists a point $u=u_*$
which is a local stable attractor of the subsystem
$u_{n+1}=\phi(u_n)$. In this case, we say that system
(\ref{problema1}) has a \emph{local attractive fiber} $\{u=u^*\}$.
Our objective is to know if the asymptotic dynamics of the orbits
corresponding to points in the \emph{basin of attraction} of the
\emph{limit fiber} $\{u=u_*\}$ is characterized by the dynamics on
this fiber, the \emph{limit dynamics}. In all the cases, we will
assume that the limit dynamics is very simple, that is: either (A)
the fiber $\{u=u_*\}$ contains a global attractor, see Proposition
\ref{propoprob1attractor}; (B) the fiber is filled by fixed points,
Theorem \ref{propoprob1identitatv2}(a); or  (C) it is filled by
$2$-periodic orbits, Theorem \ref{propoprob1identitatv2}(b). Observe
that  there is no need to consider the case in which there is a
global repellor on the fiber $\{u=u_*\}$, since in this situation it
is clear that any orbit with initial conditions in the basin of
attraction of this fiber is unbounded (otherwise there should be
accumulation points in the fiber).

The paper is structured as follows. In Section \ref{main} we present
the main results (Proposition \ref{propoprob1attractor}, Theorem
\ref{propoprob1identitatv2} and Corollary \ref{corolaridiff})
together with some motivating examples. The proofs are given in
Section \ref{proofs}.

Section \ref{Aplications1} is devoted to show some applications of
the main results. For instance, in Section \ref{quasi1} we study the
limit dynamics of some linear quasi-homogeneous maps. The example
considered there shows that, for this class of maps, the shape of
the basins of attraction of the origin can have a certain level of
complexity.

In Section \ref{secondordermultiplicative} we apply our results to
study the global dynamics of difference equations of multiplicative
type $ x_{n+2}=x_ng(x_nx_{n+1})$, with $g$  being a $\mathcal{C}^1$
function. In particular, we reobtain the results in \cite{BL}
concerning the  difference equations $
x_{n+2}={x_n}/{(a+bx_nx_{n+1})},\,a,b\in\R.$ The more general
 recurrences  $x_{n+k}=x_ng(x_n\ldots x_{n+k-1})$ are
studied in Section \ref{higher}. We also apply the results to study
the additive difference equations
$x_{n+2}=-bx_{n+1}+g(x_{n+1}+bx_{n})$ in Section~\ref{Aplications3}.

  Finally, we also present several different type of
recurrences for which the theory developed in  this work applies in
order to obtain a full characterization of their dynamics.

\section{Motivating examples and main results}\label{main}

We started motivated by the following   clarifying example, inspired
in the ones developed for continuous systems in \cite{CGM97} (a
particular case is also considered in \cite{IB}, see also \cite{XX}
and \cite{X}), which shows that there are systems of type
\begin{equation}\label{triang} \left\{
  \begin{array}{ll}
    x_{n+1} & =\sum\limits_{\ell\geq 0} f_\ell(u_n) x_n^\ell, \\
    u_{n+1} & =\lambda u_n+o(u_n),
  \end{array}
 \right.\end{equation}
with $|\lambda|<1$, such that  $\{u=0\}$ is a global limit fiber,
having a global attractor for the restricted dynamics on $\{u=0\}$,
 but having unbounded orbits   $(x_n,u_n)$, with $u_n\to 0$, in their the global dynamics.

\medskip

\noindent \textsl{Example A: Hyperbolic globally attracting fiber
with a restricted global attractor but having unbounded solutions.}
Consider the system
\begin{equation}\label{cascuriosgeneral}
\left\{
  \begin{array}{ll}
    x_{n+1} & = \mu\, x_n+ a\, u_n^j x_n^\ell, \\
    u_{n+1} & =\lambda\, u_n,
  \end{array}
\right.
\end{equation}
with $|\lambda|<1$, $|\mu|\leq 1$ and $a\in\R$. Of course, this
system preserves the fibration $\{u=c;\, c\in \R\}$, and it has a
\emph{global} attracting limit fiber $\{u=0\}$. On this fiber the
dynamics is the following: the origin is attractive if $|\mu|<1$;
the fiber is a continuum of fixed points if $\mu=1$; and the fiber
is a continuum of $2$--periodic orbits if $\mu=-1$. However, as
shown in Proposition \ref{propocascurios},  if $j(\ell -1)>0$ then
there are initial conditions giving rise to unbounded solutions, and
therefore the global dynamics is not characterized by the dynamics
  on the global limit fiber.

\begin{propo}\label{propocascurios}
 The curve
$$
\Gamma=\left\{u^jx^{\ell-1}=\frac{\lambda^\alpha-\mu}{a},\mbox{
where } \alpha=-\frac{j}{\ell-1}\right\}
$$ is invariant for system (\ref{cascuriosgeneral}). Moreover, for any initial condition $(x_0,u_0)\in\Gamma$
the associated orbit is given by $x_n=\left(\lambda^\alpha\right)^n
x_0$, $u_n=\lambda^n u_0$. If $j(\ell -1)>0$, then  any initial
condition $(x_0,u_0)\in\Gamma$ gives rise to
 unbounded solutions, with $u_n\to 0$.
\end{propo}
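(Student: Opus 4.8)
The plan is to use the monomial $u^j x^{\ell-1}$ as the organizing quantity: on $\Gamma$ it is constant and equal to $K:=(\lambda^\alpha-\mu)/a$, and it is precisely this value that turns the nonlinear update into a pure rescaling of $x$. First I would establish the invariance of $\Gamma$ while simultaneously identifying the action on the $x$-coordinate. Taking $(x_0,u_0)\in\Gamma$, so that $u_0^jx_0^{\ell-1}=K$, I factor the first equation of (\ref{cascuriosgeneral}):
\[
x_1=\mu x_0+a\,u_0^jx_0^{\ell}=x_0\bigl(\mu+a\,u_0^jx_0^{\ell-1}\bigr)=x_0(\mu+aK)=\lambda^\alpha x_0,
\]
since $aK=\lambda^\alpha-\mu$. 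Using $u_1=\lambda u_0$, I then verify that the image lies again on $\Gamma$:
\[
u_1^jx_1^{\ell-1}=(\lambda u_0)^j(\lambda^\alpha x_0)^{\ell-1}=\lambda^{\,j+\alpha(\ell-1)}\,u_0^jx_0^{\ell-1}=K,
\]
where the cancellation $j+\alpha(\ell-1)=0$ is exactly the definition $\alpha=-j/(\ell-1)$. This proves invariance.

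Once invariance is in place, the explicit orbit follows by a one-line induction. The second equation gives $u_n=\lambda^nu_0$ immediately. Since every iterate remains on $\Gamma$, the factorization above applies at each step and yields $x_{n+1}=\lambda^\alpha x_n$, whence $x_n=\left(\lambda^\alpha\right)^nx_0$. This settles the first two assertions.

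For the asymptotic claim I assume $j(\ell-1)>0$. Then $\alpha=-j/(\ell-1)<0$, and since $0<|\lambda|<1$ a negative exponent forces $|\lambda^\alpha|>1$. Combined with $|\mu|\le1$ this gives $|\lambda^\alpha|>1\ge|\mu|$, so in particular $\lambda^\alpha\neq\mu$ and hence $K\neq0$; consequently $x_0\neq0$ (and $u_0\neq0$) for every point of $\Gamma$. Therefore $|x_n|=|\lambda^\alpha|^n|x_0|\to\infty$ while $u_n=\lambda^nu_0\to0$, which is the stated behaviour. The computations are elementary, so there is no serious obstacle; the points requiring care are the single algebraic identity $j+\alpha(\ell-1)=0$ that drives the whole argument, the chain of equivalences $j(\ell-1)>0\Leftrightarrow\alpha<0\Leftrightarrow|\lambda^\alpha|>1$, and the implicit conventions that $a\neq0$ and that $\lambda^\alpha$ is well defined (so that, e.g.\ for non-integer $\alpha$ one reads $\lambda>0$), which together guarantee $x_0\neq0$ and hence genuine blow-up rather than a fixed zero.
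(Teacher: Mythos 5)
Your proof is correct and takes essentially the same route as the paper's: both arguments hinge on the single identity $j+\alpha(\ell-1)=0$, which makes the monomial term autonomous along the orbit and reduces the dynamics on $\Gamma$ to multiplication by $\lambda^\alpha$, with $|\lambda^\alpha|>1$ exactly when $j(\ell-1)>0$. The only difference is presentational: the paper imposes the ansatz $x_n=(\lambda^\alpha)^n x_0$, $u_n=\lambda^n u_0$ and checks consistency, whereas you verify invariance one step at a time and induct, additionally making explicit the small but worthwhile point that $K\neq 0$ forces $x_0\neq 0$ on $\Gamma$, so the solutions genuinely blow up.
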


\begin{proof}
Imposing that system (\ref{cascuriosgeneral}) has  solutions of the
form $x_n=\left(\lambda^\alpha\right)^n x_0$, $u_n=\lambda^n u_0$,
we easily get
$$
\lambda^\alpha \,x_0= \mu\, x_0+ a\,\lambda^{nj+\alpha n(\ell-1)}\,
u_0^j x_0^\ell.
$$

Observe that if  $ nj+\alpha n(\ell-1)=0$  (that is when
$\alpha=-j/(\ell-1)$)  we obtain that $\Gamma$ is an invariant
curve. Moreover,  if $j(\ell -1)>0$, then $|\lambda^\alpha|>1$, and
the orbits on $\Gamma$ are unbounded solutions.
\end{proof}

Example A shows that, in general, for  systems of type
(\ref{triang}) with terms of degree greater or equal than $2$ in
$x_n$, we cannot expect that  the global
 dynamics in the basin of attraction of the limit
fiber $\{u=0\}$ is characterized by the dynamics on the fiber.
 However, it remains to explore the case of
systems of type (\ref{problema1}) with affine terms in $x_n$. For
these systems, the possible limit dynamics that we consider are the
cases (A), (B) and (C) mentioned in the introduction.

\medskip

The first result concerns the case (A), when the the limit fiber
contains a global attractor. This case is characterized by the fact
that   $|f_1(u_*)|<1$, and the attractor is
$x_*=f_0(u_*)/(1-f_1(u_*))$;

\begin{propo}\label{propoprob1attractor}
Consider the system (\ref{problema1}) with $f_0$ and $f_1$
continuous
 and $|f_1(u_*)|<1$. Suppose that $u=u_*$ is an
attractive point of $u_{n+1}=\phi(u_n)$. Then, for all initial
condition $(x_0,u_0)$ with $u_0$ in the basin of attraction of
$u=u_*$, we have $\lim\limits_{n\to
\infty}(x_n,u_n)=(f_0(u_*)/(1-f_1(u_*)),u_*)$.
\end{propo}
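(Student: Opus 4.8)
The plan is to treat the two coordinates separately, reducing the statement to a one-dimensional nonautonomous linear recursion with asymptotically contracting coefficients. Since $u_0$ lies in the basin of attraction of $u_*$, the hypothesis that $u_*$ is an attractive point of $u_{n+1}=\phi(u_n)$ gives immediately $u_n\to u_*$; this settles the second coordinate and provides the input for the first.

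For the first coordinate, I would set $x_*=f_0(u_*)/(1-f_1(u_*))$, which is well defined because $|f_1(u_*)|<1$, and introduce the shifted variable $y_n=x_n-x_*$. Writing $a_n=f_0(u_n)$, $b_n=f_1(u_n)$, $a_*=f_0(u_*)$ and $b_*=f_1(u_*)$, the recursion $x_{n+1}=a_n+b_nx_n$ becomes, after using the identity $a_*-x_*(1-b_*)=0$,
$$y_{n+1}=b_n\,y_n+\ve_n, \qquad \ve_n:=(a_n-a_*)+x_*(b_n-b_*).$$
By continuity of $f_0$ and $f_1$ together with $u_n\to u_*$, we have $b_n\to b_*$ with $|b_*|<1$ and $\ve_n\to 0$.

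The heart of the argument is then to show that any such recursion forces $y_n\to 0$. I would fix $\rho$ with $|b_*|<\rho<1$; since $b_n\to b_*$ there is an index $N$ with $|b_n|\le\rho$ for all $n\ge N$, so that $|y_{n+1}|\le\rho\,|y_n|+|\ve_n|$ for $n\ge N$. Given $\delta>0$, choose $M\ge N$ with $|\ve_n|<\delta(1-\rho)$ for $n\ge M$; the affine map $z\mapsto\rho z+\delta(1-\rho)$ has fixed point $\delta$ and contraction factor $\rho$, whence $|y_{n+1}|-\delta\le\rho\,(|y_n|-\delta)$ for $n\ge M$, which yields $\limsup_{n\to\infty}|y_n|\le\delta$. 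As $\delta>0$ is arbitrary, $y_n\to 0$, that is $x_n\to x_*$. Combining the two coordinates gives the claimed limit.

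The main obstacle is that the recursion for $y_n$ is genuinely nonautonomous: the coefficients $b_n$ are not constant, so one cannot simply iterate a single contraction nor read off geometric decay directly. The device that overcomes this is to pass to a tail on which $|b_n|$ is uniformly bounded by some $\rho<1$, and then to absorb the vanishing forcing $\ve_n$ by comparison with the autonomous affine contraction whose fixed point can be made arbitrarily small.
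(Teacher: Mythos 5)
Your proof is correct, but it takes a genuinely different route from the paper's. The paper first proves that $\{x_n\}$ is bounded by iterating the estimate $|x_{n+1}|\leq M+N|x_n|$, where $N<1$ bounds $|f_1|$ and $M$ bounds $|f_0|$ on a small neighborhood of $u_*$, and then argues by compactness: it sets $i=\liminf x_n$, $s=\limsup x_n$, extracts subsequences converging to $(i,u_*)$ and $(s,u_*)$, passes the recurrence to the limit using continuity of $f_0$ and $f_1$, and concludes that both $i$ and $s$ solve the fixed-point equation $t=f_0(u_*)+f_1(u_*)t$, whence $i=s=f_0(u_*)/(1-f_1(u_*))$. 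You instead recenter at the candidate limit $x_*$, turn the dynamics into the nonautonomous affine recursion $y_{n+1}=b_n y_n+\varepsilon_n$ with $b_n\to b_*$, $|b_*|<1$, $\varepsilon_n\to 0$, and force $y_n\to 0$ by comparison with the autonomous contraction $z\mapsto\rho z+\delta(1-\rho)$. Your version is more quantitative and self-contained: boundedness is not needed as a separate step (it falls out of the comparison), no appeal to Bolzano--Weierstrass is required, and you avoid the point the paper treats rather tersely --- namely, justifying that the $\limsup$ and $\liminf$ themselves satisfy the fixed-point equation, which strictly speaking requires taking subsequences along shifted indices and handling the sign of $f_1(u_*)$. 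What the paper's softer argument buys is brevity once boundedness is established, and a limit-point template that requires no algebraic recentering; both proofs use exactly the same hypotheses (continuity of $f_0,f_1$ at $u_*$ and $u_n\to u_*$), so neither is more general than the other.
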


We want to point out that the above one is not a local result. In
fact, the convergence is guaranteed for all $(x_0,u_0)$ such that
$u_0$ is in the basin of attraction of $u=u_*.$
\medskip

It is not difficult to see that in the case (B), when the limit
fiber $\{u=u_*\}$ is filled by fixed points, there are systems such
that the orbits corresponding to initial conditions in the basin of
attraction of the limit fiber tend to some of the fixed points on
the limit fiber, which depend on the initial conditions. Indeed:

\medskip

\noindent \textsl{Example B: Hyperbolic attracting fiber and fast
enough convergence to the limit dynamics.} Consider the system given
by
$$ \left\{
  \begin{array}{ll}
    x_{n+1} & = \left(1+a\,u_n\right)\, x_n, \\
    u_{n+1} & =\lambda\, u_n,
  \end{array}
\right.
$$
with $|\lambda|<1$, and $a\in\R$. Then
$$
x_{n+1}=\left(\prod\limits_{k=0}^{n} \left(1+a\lambda^k
u_0\right)\right)\, x_0.
$$
Observe that the infinite product $P(u_0)=\prod_{k=0}^{\infty}
\left(1+a\lambda^k u_0\right)$ is convergent since, as
$|\lambda|<1$, we have
$$ S(u_0)=\ln(P(u_0))=\sum\limits_{k=0}^{\infty} \ln(1+a\lambda^k
u_0)\sim \sum\limits_{k=0}^{\infty} a\lambda^k
u_0=\frac{au_0}{1-\lambda}<\infty.
$$ So, for each initial condition $(x_0,u_0)$ the associated orbit
converges to the fixed point $(P(u_0)x_0,0)$.

\medskip

The next example shows, however, that the above situation does not
occur when the convergence of $f_1(u_n)$ to $1$ is too slow.

\medskip

\noindent \textsl{Example C: Hyperbolic attracting fiber but slow
convergence to the limit dynamics.} Consider the system
$$
\left\{
  \begin{array}{ll}
    x_{n+1} & = f_1(u_n)\, x_n, \\
    u_{n+1} & =\lambda\, u_n,
  \end{array}
\right.
$$
with $|\lambda|<1$, and $$f_1(u)=\left\{
                                  \begin{array}{ll}
                                    \displaystyle{1-\frac{1}{\ln |u|}} & \hbox{ for } u\neq 0, \\
                                    1 & \hbox{ for } u=0.
                                  \end{array}
                                \right.
$$
Again, an straightforward computation gives
$$
x_{n+1}=\left(\prod\limits_{k=0}^{n}
\displaystyle{\left(1-\frac{1}{k\ln|\lambda|+\ln|u_0|}\right)}\right)\,
x_0.
$$
Using that $\sum\limits_{k=0}^\infty
\ln\left(1-\displaystyle{\frac{1}{k\ln|\lambda|+\ln|u_0|}}\right)\sim
\sum\limits_{k=0}^\infty
\displaystyle{\frac{1}{k\ln|\lambda|+\ln|u_0|}}$, which is a
divergent series, we get  that the infinite product
$\prod\limits_{k=0}^{\infty}
\left(1-\displaystyle{\frac{1}{k\ln|\lambda|+\ln|u_0|}}\right)$ is
also divergent. Hence each initial condition $(x_0,u_0)$ gives rise
to an unbounded sequence $\{x_n\}$ and, therefore,  the dynamics on
the basin of attraction of the limit fiber $\{u=0\}$ is not
characterized by the dynamics on the fiber.

\medskip

The above example leads us to introduce the following definition. As
we will see in Example E below, it gives an optimal characterization
of the speed of convergence of the terms $f_0(u_n)$ and $f_1(u_n)$
to guarantee the convergence of the orbits in the basin of
attraction of the limit fiber.

\begin{defi}[Fast
enough convergence to the limit dynamics property]\label{fecld} If
$u_*$ is a stable attractor of $u_{n+1}=\phi(u_n)$, we say that the
attracting fiber $\{u=u_*\}$ of  system (\ref{problema1}) with
$|f_1(u_*)|=1$, has \emph{the fast enough convergence to the limit
dynamics property} if there exists $\varepsilon>0$  such that if
$\{u_n\}$ is a solution of $u_{n+1}=\phi(u_n)$ satisfying
$|u_{n_0}-u_*| \leq \varepsilon$ for some $n_0$,  then there exists
$p_n$ such that $|u_{n}-u_*|\leq p_n\leq \varepsilon$ for all $n\geq
n_0$, $\lim\limits_{n\to\infty} p_n=0$, and also there exist two
functions $V,W:[0,\varepsilon)\rightarrow \R^+$ such that for all
$|u-u_*|<\varepsilon$ it is satisfied:
\begin{enumerate}
  \item[$H_1$:] $|f_0(u)-f_0(u_*)|\leq W(|u-u_*|)\leq \overline{W}\in\mathbb{R}$.
  \item[$H_2$:] $|f_1(u)-f_1(u_*)|\leq V(|u-u_*|)\leq \overline{V}\in\mathbb{R}$.
  \item[$H_3$:] $V(\nu)$ and $W(\nu)$ are non decreasing for $0\leq \nu<\varepsilon$;
  \item[$H_4$:]  $S_W=\sum_{j=0}^\infty W(p_j)<\infty$ and $S_V=\sum_{j=0}^\infty V(p_j)<\infty$.
\end{enumerate}
\end{defi}

\begin{defi}[]   Let $u=u_*$
be a stable attractive fixed point of $\phi(u)$.  We say that
$\phi(u)$ is \emph{locally contractive} at $u=u_*$ if there exists
an open neighborhood $V$ of $u_*$ such that any $u\in
V\setminus\{u_*\}$
\begin{equation}\label{locatr1}
|\phi(u)-u_*|\,<\,|u-u_*|.
\end{equation}
\end{defi}

Prior to state the next results, we   state the following result
about local contractivity.

\begin{propo}\label{expans}
Let $\phi:\mathcal{U}\to \mathcal{U}$ be a continuous function and
let $u_*$ be a stable attractive fixed point of $\phi(u)$.
\begin{enumerate}
\item [(a)] If $\phi$ is an orientation preserving function, then $\phi(u)$
is locally contractive at $u=u_*$.
\item [(b)] There exist orientation reversing functions $\phi(u)$ which are not locally contractive at $u=u_*$.
\item [(c)] If $\phi\in\mathcal{C}^1(\mathcal{U})$ and
$u=u_*$ is an hyperbolic fixed point of $\phi(u)$  then $\phi(u)$ is
locally contractive at $u=u_*$.
\end{enumerate}
\end{propo}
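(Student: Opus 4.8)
The plan is to reduce each of the three parts to elementary one-dimensional dynamics in a neighborhood of $u_*$, using throughout that $\phi(u_*)=u_*$; where convenient I would translate so that $u_*=0$.

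For part (a) I would first record that an orientation preserving continuous map is (strictly) increasing, so from $\phi(u_*)=u_*$ it follows that $\phi(u)>u_*$ for $u>u_*$ and $\phi(u)<u_*$ for $u<u_*$. Thus $\phi$ never changes the sign of $u-u_*$, and the contractivity inequality $|\phi(u)-u_*|<|u-u_*|$ is equivalent to $\phi(u)<u$ to the right of $u_*$ and to $\phi(u)>u$ to its left. Let $(u_*-\delta,u_*+\delta)$ lie in the basin of attraction. The key step is to prove $\phi(u)<u$ for every $u\in(u_*,u_*+\delta)$: if instead $\phi(u)\ge u$, then monotonicity forces the orbit $u_n=\phi^n(u)$ to be non-decreasing (from $u_1=\phi(u)\ge u=u_0$ one gets $u_{n+1}=\phi(u_n)\ge u_n$ by induction), so it stays $\ge u>u_*$ and cannot converge to $u_*$, contradicting attractivity; the boundary case $\phi(u)=u$ produces a fixed point distinct from $u_*$ inside the basin, equally impossible. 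The symmetric argument on $(u_*-\delta,u_*)$ gives $\phi(u)>u$ there, and the two together yield local contractivity. I expect this monotone-orbit argument to be the conceptual heart of the proposition.

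For part (b) I would produce an explicit orientation reversing counterexample. Set $u_*=0$, fix constants $a>1>b>0$ with $ab<1$ (for instance $a=2$, $b=1/4$), and define $\phi(u)=-a\,u$ for $u\ge0$ and $\phi(u)=-b\,u$ for $u\le0$. This $\phi$ is continuous, strictly decreasing, and fixes $0$. A direct computation gives $\phi^2(u)=ab\,u$ for all $u$, whence $\phi^{2k}(u)=(ab)^k u\to0$ while the odd iterates are bounded in modulus by $a|u|$; consequently $0$ is both stable and attractive. Nevertheless $|\phi(u)|=a\,u>u=|u|$ for every $u>0$, so no neighborhood of $0$ satisfies the contractivity inequality. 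The one point that needs care is the asymmetry $a>1>b$: taking $a>1$ is what destroys contraction, while $ab<1$ is what preserves attraction, and any symmetric scaling would fail one of these.

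For part (c) hyperbolicity means $|\phi'(u_*)|\neq1$, and since an attractive fixed point cannot have $|\phi'(u_*)|>1$ (that would make $u_*$ repelling and contradict attractivity), one has $|\phi'(u_*)|<1$. Choosing $L$ with $|\phi'(u_*)|<L<1$ and invoking continuity of $\phi'$, there is a neighborhood of $u_*$ on which $|\phi'|\le L$; the Mean Value Theorem then yields $|\phi(u)-u_*|=|\phi(u)-\phi(u_*)|=|\phi'(\xi)|\,|u-u_*|\le L\,|u-u_*|<|u-u_*|$ for $u\neq u_*$, which is precisely local contractivity. This last part is routine, so the substantive work lies in the monotonicity argument of part (a) and in isolating the right counterexample for part (b).
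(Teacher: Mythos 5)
Your proof is correct, but parts (a) and (b) take genuinely different routes from the paper. For (a), the paper argues by contradiction at the level of fixed points: from a failure of contractivity it extracts a monotone sequence $u_n\to u_*$ with $\phi(u_n)\ge u_n$, compares it with a genuine orbit $w_n$ decreasing to $u_*$ (which satisfies $\phi(w_n)<w_n$), and applies the intermediate value theorem to $\phi-\mathrm{Id}$ on the interlaced points to manufacture fixed points $t_n\to u_*$, contradicting the fact that attractivity makes $u_*$ an isolated fixed point. Your argument is more direct and more elementary: the offending point itself has a monotone non-decreasing orbit, which stays above $u$ and so cannot converge to $u_*$; no auxiliary orbit and no intermediate value theorem are needed, though both proofs run on the same engine, namely that orbits of an increasing map are monotone. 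For (b), the paper's counterexample is the smooth map $\phi(u)=-u-u^2$ (invariant interval $[-2,1/4]$, attracting fixed point $0$, but $|\phi(u)|>|u|$ for $u>0$), whereas yours is piecewise linear with slopes $-a$ and $-b$. Yours is easier to verify --- the identity $\phi^2(u)=ab\,u$ makes stability and attraction immediate --- but the paper's example buys something extra: it is $\mathcal{C}^1$ with $\phi'(0)=-1$, so it simultaneously shows that the hyperbolicity hypothesis in part (c) cannot be weakened to mere differentiability; your example, being non-differentiable at the fixed point, does not document that (though the statement as written does not require it). For (c) you give the same mean value theorem argument that the paper only sketches, with the added care of justifying why attractivity rules out $|\phi'(u_*)|>1$.
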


Now we are ready to   present our result about the cases (B) and
(C). Observe that if $f_1(u_*)=1$ and $f_0(u_*)=0$, then the limit
fiber is filled by fixed points (if $f_0(u_*)\neq 0$ then there are
unbounded solutions). When $f_1(u_*)=-1$, the fiber
 is filled by $2$-periodic orbits    and there is a fixed point given by $x_*=f_0(u_*)/2$.

\begin{teo}\label{propoprob1identitatv2}
Consider system (\ref{problema1}) where $f_0$ and $f_1$ are
continuous functions,  and $|f_1(u_*)|=1$. Suppose that $\{u=u_*\}$
is an attracting fiber satisfying the fast enough convergence to the
limit dynamics property, and consider initial conditions $(x_0,u_0)$
with $u_0$  in the basin of attraction of $u=u_*$ for the recurrence
$u_{n+1}=\phi(u_n).$ Then
\begin{enumerate}
  \item[(a)] If $f_1(u_*)=1$ and $f_0(u_*)=0$, then there exists
$\ell(x_0,u_0)\in \mathbb{R}$ such that $\lim\limits_{n\to
\infty}(x_n,u_n)=(\ell(x_0,u_0),u_*).$

  \item[(b)] If $f_1(u_*)=-1$ and additionally $\phi$ is locally
  contractive at $u=u_*$, then
$\lim\limits_{n\to \infty}(x_{2n},u_{2n})=(\ell(x_0,u_0),u_*)$ and
$\lim\limits_{n\to
\infty}(x_{2n+1},u_{2n+1})=(f_0(u_*)-\ell(x_0,u_0),u_*).$
\end{enumerate}
\end{teo}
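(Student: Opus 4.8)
The plan is to solve the recurrence explicitly and control the resulting infinite products and sums using the fast enough convergence hypotheses. Since the $u$-dynamics is autonomous, fix an initial condition $(x_0,u_0)$ with $u_0$ in the basin of attraction of $u_*$. Then $\{u_n\}$ is a fixed sequence tending to $u_*$, and the $x$-recurrence is linear with variable coefficients:
\begin{equation*}
x_{n+1}=f_0(u_n)+f_1(u_n)\,x_n.
\end{equation*}
Unrolling this gives the closed form
\begin{equation*}
x_{n}=\left(\prod_{k=0}^{n-1} f_1(u_k)\right)x_0+\sum_{j=0}^{n-1}f_0(u_j)\prod_{k=j+1}^{n-1}f_1(u_k),
\end{equation*}
so everything reduces to understanding the asymptotics of the product $\prod_{k=0}^{n-1}f_1(u_k)$ and of the sum. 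Without loss of generality, by discarding finitely many initial terms I may assume $|u_{n}-u_*|\le p_n\le\varepsilon$ for all $n\ge 0$ with $p_n\to 0$, since convergence of the tail is what matters.

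For part (a), where $f_1(u_*)=1$ and $f_0(u_*)=0$, I would write $f_1(u_k)=1+g_k$ with $|g_k|=|f_1(u_k)-f_1(u_*)|\le V(|u_k-u_*|)\le V(p_k)$ by $H_2$ and $H_3$, and $|f_0(u_k)|=|f_0(u_k)-f_0(u_*)|\le W(p_k)$ by $H_1$. The key point is that $H_4$ gives $\sum_j V(p_j)<\infty$, which is exactly the condition guaranteeing that the infinite product $\prod_{k=0}^{\infty}(1+g_k)$ converges to a finite nonzero limit; one sees this by taking logarithms and using $\ln(1+g_k)\sim g_k$, comparing with the convergent series $\sum V(p_j)$. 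This is the quantitative replacement for Example C, where the series diverged. For the sum, I would bound the tail products $\prod_{k=j+1}^{n-1}f_1(u_k)$ uniformly in $n$ and $j$ (they are bounded since the full product converges absolutely), so that $\left|\sum_{j} f_0(u_j)\prod_{k>j}f_1(u_k)\right|$ is dominated by a constant times $\sum_j W(p_j)<\infty$ from $H_4$. A standard dominated-convergence / Cauchy-sequence argument then shows $x_n$ converges to some limit $\ell(x_0,u_0)$, and since $u_n\to u_*$ we get the claimed convergence to $(\ell(x_0,u_0),u_*)$.

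For part (b), where $f_1(u_*)=-1$, the single-step product oscillates in sign, so I would pass to the second-iterate map. Because $\phi$ is locally contractive at $u_*$, the even and odd subsequences $\{u_{2n}\}$ and $\{u_{2n+1}\}$ each converge to $u_*$ monotonically in modulus with the same kind of summable control, and $f_1(u_{2k})f_1(u_{2k+1})\to 1$. I would group the product two terms at a time, writing $f_1(u_{2k})f_1(u_{2k+1})=1+h_k$ with $|h_k|$ again summable (using $H_2$, $H_3$, $H_4$ together with $|f_1(u_*)|=1$ so that the cross terms are controlled by $V(p_{2k})+V(p_{2k+1})$ up to a bounded factor), so the paired product $\prod_k(1+h_k)$ converges. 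This handles the linear part of $x_{2n}$; the forcing sum is treated as in part (a) to produce the limit $\ell(x_0,u_0)=\lim x_{2n}$. Finally, the odd limit is obtained from the relation $x_{2n+1}=f_0(u_{2n})+f_1(u_{2n})x_{2n}$: letting $n\to\infty$ and using continuity of $f_0,f_1$ at $u_*$ with $f_0(u_{2n})\to f_0(u_*)$ and $f_1(u_{2n})\to -1$ gives $\lim x_{2n+1}=f_0(u_*)-\ell(x_0,u_0)$, as stated.

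The main obstacle I expect is the convergence-of-infinite-product argument and, in particular, making the passage from $H_4$ (summability against the \emph{monotone majorant} $p_n$) to actual control of $\sum_k|f_1(u_k)-f_1(u_*)|$ fully rigorous: one must verify that $p_n\to 0$ together with $H_3$ forces $V(|u_k-u_*|)\le V(p_k)$ termwise, and then that absolute convergence of $\sum\ln f_1(u_k)$ (for the paired product in case (b)) follows from the comparison $|\ln(1+g_k)|\le 2|g_k|$ valid for $|g_k|$ small. The role of local contractivity in part (b) is precisely to ensure the two subsequences behave like genuine contractions so that the pairing is well-defined and the majorant $p_n$ still applies along each parity class; checking this compatibility is the delicate bookkeeping step.
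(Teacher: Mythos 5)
Your proposal is correct in substance, but it takes a genuinely different route from the paper. For part (a) you solve the recurrence explicitly, $x_n=\bigl(\prod_{k=0}^{n-1} f_1(u_k)\bigr)x_0+\sum_{j=0}^{n-1}f_0(u_j)\prod_{k=j+1}^{n-1}f_1(u_k)$, and control the infinite product via $\sum_k V(p_k)<\infty$ and the forcing sum by a dominated-convergence (Tannery-type) argument; the paper never writes the solution down: it first proves boundedness of $\{x_n\}$ by unrolling the inequality $|x_{n+1}-f_0(u_*)|\le W(p_n)+(1+V(p_n))|x_n|$ (the same product $P=\prod_j(1+V(p_j))<\infty$ appears there), and then shows $\{x_n\}$ is Cauchy from $|x_{n+1}-x_n|\le W(p_n)+V(p_n)R$ together with $H_4$. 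The paper's version sidesteps the small nuisances of yours (possible vanishing factors $f_1(u_k)=0$, and justifying the interchange of limit and sum), while yours yields an explicit formula for the limit $\ell(x_0,u_0)$. In part (b) the divergence is more interesting. Both proofs pass to the second iterate, but the paper does so by verifying the axioms $H_1$--$H_4$ for the two-step system $y_{n+1}=F_0(v_n)+F_1(v_n)y_n$, $v_{n+1}=\phi^2(v_n)$ as \emph{pointwise} bounds valid for every $v$ near $v_*$ (constructing $\widetilde{W}=AW+BV$, $\widetilde{V}=CV$) and then quoting part (a) as a black box; local contractivity is indispensable precisely there, to bound $W(|\phi(v)-v_*|)$ by $W(|v-v_*|)$ via $H_3$ for arbitrary $v$, not just orbit points. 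Your pairing argument works \emph{along the orbit}, where $|\phi(u_{2n})-u_*|=|u_{2n+1}-u_*|\le p_{2n+1}$ comes for free, so your route in fact never needs contractivity at all; your stated reason for invoking it (that the pairing be well-defined and the majorant apply along parity classes) is off the mark, since both facts are automatic from the definition of the fast-enough-convergence property.

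Two points in your sketch need tightening to be complete. First, the paired forcing term cannot be treated "as in part (a)" verbatim, because $f_0(u_*)$ need not vanish when $f_1(u_*)=-1$: you must exploit the cancellation $f_0(u_*)\bigl(1+f_1(u_*)\bigr)=0$, e.g. writing $f_1(u_{2j+1})=-1+b_j$ to get $|f_0(u_{2j+1})+f_1(u_{2j+1})f_0(u_{2j})|\le W(p_{2j})+W(p_{2j+1})+\bigl(|f_0(u_*)|+\overline{W}\bigr)V(p_{2j+1})$, which is summable by $H_4$; this is exactly the computation the paper performs when it builds $\widetilde{W}$. Second, your claim that the product converges to a \emph{nonzero} limit requires the remark that $V(p_k)\to 0$ forces $f_1(u_k)\ne 0$ for all large $k$ (after discarding finitely many terms); convergence of $x_n$ itself does not need nonvanishing, so this is cosmetic, but as written it is asserted without justification.
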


Notice that the above result must not be interpreted in the sense
that the limit $\ell(x_0,u_0)$ is different for each initial
condition $(x_0,u_0)$.

\medskip

\noindent \textsl{Example D: Hyperbolic attracting fiber and fast
enough convergence to the limit dynamics via Theorem
\ref{propoprob1identitatv2}.} Consider the systems given by $$
\left\{
  \begin{array}{ll}
    x_{n+1} & = \left(a+b\,|u_n|^\alpha\right)\, x_n, \\
    u_{n+1} & =\lambda\, u_n,
  \end{array}
\right.
$$
with $|\lambda|<1$, $\alpha>0$ , $a\in\{-1,1\}$ and $b\in\R$.

Fixing  $\varepsilon$ and taking $|u_0|<\varepsilon$;  setting
$p_n=|\lambda|^n |u_0|$,
 $W(u)\equiv 0$, $\overline{W}=0$, $V(u)=B|u|^\alpha$ with $B=|b|$,  $\overline{V}=B\varepsilon^\alpha$ we
have that hypothesis $H_1$ and $H_3$ are trivially fulfilled. With
respect hypothesis $H_2$ we have
$$|f_1(u)-f_1(0)|=B|u|^\alpha=V(|u|)\leq B
\varepsilon^\alpha=\overline{V}.$$ On the other hand $S_W=0$ and
  $$
S_V=\sum_{j=0}^\infty V(p_j)=\sum_{j=0}^\infty
V(|\lambda|^j|u_0|)=\sum_{j=0}^\infty B |\lambda|^{j \alpha
}|u_0|^{\alpha}=\frac{B|u_0|^\alpha}{1-|\lambda|^\alpha},
  $$ so $H_4$ is also fulfilled. From Theorem
  \ref{propoprob1identitatv2}, for each initial condition
  $(x_0,u_0)$ there exists $n_0$ such that $|u_{n_0}|<\varepsilon$,
  and also there exists $\ell(x_0,u_0)\in \mathbb{R}$ such that: if
  $a=1$ then
$\lim\limits_{n\to \infty}(x_n,u_n)=(\ell(x_0,u_0),0)$, and if
$a=-1$, then
  $\lim\limits_{n\to
\infty}(x_{2n},u_{2n})=(\ell(x_0,u_0),0)$ and  $\lim\limits_{n\to
\infty}(x_{2n+1},u_{2n+1})=(-\ell(x_0,u_0),0)$.

\medskip

The next example shows that Theorem \ref{propoprob1identitatv2} is
optimal when $u=u_*$ is not a hyperbolic attractor of
$u_{n+1}=\phi(u_n)$.

\medskip

\noindent \textsl{Example E: Non-hyperbolic attracting fiber.
Optimal characterization of fast enough and slow convergence to the
limit dynamics.} Consider the systems given by
\begin{equation}\label{case} \left\{
  \begin{array}{ll}
    x_{n+1} & = \left(1+b\,|u_n|^\alpha\right)\, x_n, \\
    u_{n+1} & = |u_n|-a |u_n|^k,
  \end{array}
\right.
\end{equation}
with $a>0$, $b\neq 0$, and $k>1$.

Take $0<|u_0|<\varepsilon$ small enough and such that $u_0$ is in
the basin of attraction of $0$. Now, we claim that if $\alpha> k-1,$
then there exists $\ell(x_0,u_0)\in \mathbb{R}$ such that
$\lim\limits_{n\to \infty}(x_n,u_n)=(\ell(x_0,u_0),0)$; and if
$\alpha \leq k-1$ then $\{x_n\}$ is a divergent sequence if $b>0$,
and $\lim\limits_{n\to\infty}x_n=0$ if $b<0$. To prove this
  claim, we will use the following result that we
learned from S.~Stevi\'c \cite{Stev}.   Its proof is attributed to
E. Jacobsthal, see also \cite[Problem 174, page 217]{PZ}:

\begin{teo}[\textbf{E. Jacobsthal}]\label{stev}
Let $f:(0,\bar{x})\longrightarrow (0,\bar{x})$ with $\bar{x}>0$, be
a continuous function such that $0<f(x)<x$ for every
$x\in(0,\bar{x})$ and $f(x)=x-ax^k+bx^{k+p}+o(x^{k+p})$, where
$k>1$, $p,a$ and $b$ are positive. Let $x_0\in(0,\bar{x})$ and
$x_n=f(x_{n-1})$ $n\geq 1$. Then
$$
x_n\sim \frac{1}{\left((k-1)\,a\,n\right)^{\frac{1}{k-1}}}
$$
\end{teo}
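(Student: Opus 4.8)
The plan is to linearize the iteration through a power substitution and then apply a Stolz--Cesàro argument to the resulting increments. First I would settle the qualitative behaviour. Since $0<f(x)<x$ on $(0,\bar x)$, the orbit stays in $(0,x_0)$ and $\{x_n\}$ is strictly decreasing and bounded below by $0$, hence convergent to some $\ell\ge 0$; continuity of $f$ forces $\ell$ to be a fixed point in $[0,\bar x)$, and $f(x)<x$ for $x>0$ leaves only $\ell=0$. Thus $x_n\downarrow 0$, which is exactly what makes the local expansion of $f$ at $0$ applicable along the orbit.

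The heart of the argument is to track $y_n:=x_n^{-(k-1)}$, the quantity expected to grow linearly. I would write
$$
x_{n+1}=x_n\bigl(1-a\,x_n^{k-1}+b\,x_n^{k+p-1}+o(x_n^{k+p-1})\bigr)
$$
and raise this to the power $-(k-1)$. Because $x_n^{k-1}\to 0$, the binomial expansion gives
$$
\bigl(1-a\,x_n^{k-1}+\cdots\bigr)^{-(k-1)}=1+(k-1)\,a\,x_n^{k-1}+O\bigl(x_n^{\min\{2(k-1),\,k+p-1\}}\bigr),
$$
and multiplying by $x_n^{-(k-1)}$ yields
$$
y_{n+1}-y_n=(k-1)\,a+O\bigl(x_n^{\min\{k-1,\,p\}}\bigr).
$$
Since $k>1$, $p>0$ and $x_n\to 0$, the remainder tends to $0$, so $y_{n+1}-y_n\to (k-1)\,a$. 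Note that here the $b$-term and all higher-order data of $f$ disappear into the vanishing remainder and play no role in the leading order.

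To conclude I would invoke Stolz--Cesàro applied to the sequence of increments: from $y_{n+1}-y_n\to (k-1)\,a$ one obtains $y_n/n\to (k-1)\,a$, that is $x_n^{-(k-1)}\sim (k-1)\,a\,n$. Raising to the power $-1/(k-1)$ produces $x_n\sim\bigl((k-1)\,a\,n\bigr)^{-1/(k-1)}$, which is the assertion.

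Conceptually every step is standard, so I expect no deep obstacle. The one place demanding care is the expansion in the second paragraph: I must verify that, after multiplication by $x_n^{-(k-1)}$, every contribution other than the constant $(k-1)\,a$ genuinely vanishes. This is precisely where the hypotheses $k>1$ and $p>0$ (through $\min\{k-1,p\}>0$) together with $x_n\to 0$ enter, and I expect this bookkeeping to be the main, and essentially the only, technical point.
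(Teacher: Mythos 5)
Your proof is correct, but there is nothing in the paper to compare it against: the paper does not prove this theorem at all, it quotes it as a known result attributed to E.~Jacobsthal, citing Stevi\'c and P\'olya--Szeg\"o (Problem 174). Your argument --- establish $x_n\downarrow 0$ from monotonicity, pass to $y_n=x_n^{-(k-1)}$, use the binomial expansion to get $y_{n+1}-y_n=(k-1)a+O\bigl(x_n^{\min\{k-1,p\}}\bigr)\to (k-1)a$, and finish with Ces\`aro/Stolz --- is precisely the classical proof of this fact, and every step checks out: the fixed-point argument correctly rules out a positive limit, the exponent bookkeeping $\min\{2(k-1),k+p-1\}-(k-1)=\min\{k-1,p\}>0$ is right, and raising $y_n\sim(k-1)an$ to the power $-1/(k-1)$ is legitimate since all quantities are positive and the power function is continuous. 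Your side remark is also accurate and worth keeping: the $b$-term and the $o(x^{k+p})$ refinement are irrelevant for the stated leading-order asymptotics, so your proof actually works under the weaker hypothesis $f(x)=x-ax^k+o(x^k)$; the finer expansion in the hypothesis would only matter for higher-order corrections, which the statement does not claim. In effect, you have supplied the proof the paper outsources to the literature.
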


Taking logarithms in Equation  (\ref{case}) we have
 $\ln|x_{n+1}|=\sum\limits_{j=0}^n\ln|1+b\,|u_j|^\alpha|+\ln|x_0|$.
By Theorem~\ref{stev} we have,
$$
 \ln|x_{n+1}|=\sum\limits_{j=1}^\infty
\ln|1+b\,|u_j|^\alpha|+\ln|x_0|\sim b\, \sum\limits_{j=1}^\infty
|u_j|^\alpha\sim
\frac{b}{\left((k-1)a\right)^{\frac{\alpha}{k-1}}}\,
\sum\limits_{j=1}^\infty \frac{1}{j^{\frac{\alpha}{k-1}}}.
$$
Hence if $\alpha>k-1$, then $\{\ln |x_n|\}$ is a convergent
sequence, and if $\alpha\leq k-1$ then  $\lim\limits_{n\to\infty}
\ln |x_n|=\sign(b)\infty$, and the claim is proved.

At this point we will see that the criterium given by Theorem
\ref{propoprob1identitatv2} is optimal.   By fixing $\varepsilon>0$
small enough, taking $|u_0|<\varepsilon$, and using Theorem
\ref{stev}, we have that for the sequence $\{u_n\}$ defined by
(\ref{case}) and for all $\delta>0$, there exists $n_0$ and $A>0$
such that if $n>n_0$ then
$$
|u_n|\leq \frac{A}{n^{\frac{1}{k-1}}}\leq
\frac{1}{n^{\frac{1}{k-1}-\delta}}.
$$
Setting
$$p_n=\frac{1}{n^{\frac{1}{k-1}-\delta}},$$
 $W(u)\equiv 0$, $\overline{W}=0$, $V(u)=|u|^\alpha$, and $\overline{V}=\varepsilon^\alpha$ we
have that for any $\delta>0$, the hypothesis $H_1$--$H_3$ are
trivially fulfilled, and with respect $H_4$ we have that  $S_W=0$
and
  $$
S_V=\sum_{j=1}^\infty V(p_j)=\sum_{j=1}^\infty
\frac{1}{j^{\frac{\alpha}{k-1}-\delta}}.
 $$

Observe that $S_V$ is convergent if and only if
$\frac{\alpha}{k-1}-\delta>1$ for all $\delta>0$ or, in other words,
if and only if
$$
\alpha>(k-1)(1+\delta) \mbox{ for all } \delta>0.
$$
Hence, Theorem \ref{propoprob1identitatv2} guarantees convergence of
the sequence $\{(x_n,u_n)\}$ if $\alpha>k-1$ for all $b\neq 0$,
which is the optimal value.

\medskip

The next result shows that if system (\ref{problema1}) is a
differentiable one, and $u=u_*$ is a hyperbolic attractor of
$u_{n+1}=\phi(u_n)$, then the hypotheses of Theorem
\ref{propoprob1identitatv2} are fulfilled. Furthermore, each point
in the attracting fiber is the limit of an orbit of the basin of
attraction of the fiber.

\begin{corol}\label{corolaridiff}
Consider system (\ref{problema1}) where
$f_0,f_1,\phi\in\mathcal{C}^1(\mathcal{U})$ where $\cal{U}$ is a
neighborhood of $u=u_*$, and $|f_1(u_*)|=1$. Suppose that $u=u_*$ is
a hyperbolic attractor of $\phi$.  Then, for any $u_0$ in the basin
of attraction of $u=u_*$ there exists $\ell(x_0,u_0)\in\R$ such
that:
\begin{enumerate}
  \item[(a)] If $f_0(u_*)=0$ and $f_1(u_*)=1$, then  $\lim\limits_{n\to
\infty}(x_n,u_n)=(\ell(x_0,u_0),u_*)$.
  \item[(b)] If $f_1(u_*)=-1$, then  $\lim\limits_{n\to
\infty}(x_{2n},u_{2n})=(\ell(x_0,u_0),u_*)$ and  $\lim\limits_{n\to
\infty}(x_{2n+1},u_{2n+1})=(f_0(0)-\ell(x_0,u_0),u_*)$.
\end{enumerate}
Furthermore, for any point $(x_*,u_*)\in\{u=u_*\}$  there exists an
initial condition $(x_0,u_0)$ in the basin of attraction of the
limit fiber such that $\lim\limits_{n\to\infty}
(x_n,u_n)=(x_*,u_*)$.
\end{corol}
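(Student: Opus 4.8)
The plan is to prove Corollary \ref{corolaridiff} by reducing it to Theorem \ref{propoprob1identitatv2} and Proposition \ref{expans}, so the real work is verifying that the $\mathcal{C}^1$ plus hyperbolicity hypotheses force the fast enough convergence to the limit dynamics property, and then handling the surjectivity (``furthermore'') claim separately. First I would fix the setup: since $u_*$ is a hyperbolic attractor of $\phi\in\mathcal{C}^1$, we have $|\phi'(u_*)|<1$, so there exist a constant $\rho$ with $|\phi'(u_*)|<\rho<1$ and a neighborhood $(u_*-\varepsilon,u_*+\varepsilon)$ on which $|\phi(u)-u_*|\leq\rho\,|u-u_*|$. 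This is exactly Proposition \ref{expans}(c), which already gives local contractivity, so part (b)'s extra hypothesis is automatic here. Iterating the contraction bound, any $\{u_n\}$ entering this neighborhood at time $n_0$ satisfies $|u_n-u_*|\leq\rho^{\,n-n_0}\varepsilon$, so I set $p_n=\rho^{\,n-n_0}\varepsilon$ (for $n\geq n_0$), which is nonincreasing and tends to $0$.

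Next I would construct the functions $V,W$ witnessing $H_1$--$H_4$. Because $f_0,f_1\in\mathcal{C}^1$ on $\mathcal{U}$, the mean value theorem gives Lipschitz bounds on a possibly smaller neighborhood: there exist constants $L_0,L_1$ with $|f_0(u)-f_0(u_*)|\leq L_0\,|u-u_*|$ and $|f_1(u)-f_1(u_*)|\leq L_1\,|u-u_*|$ for $|u-u_*|<\varepsilon$. Hence I take $W(\nu)=L_0\,\nu$ and $V(\nu)=L_1\,\nu$, which are nondecreasing (giving $H_3$) and bounded by $\overline W=L_0\varepsilon$, $\overline V=L_1\varepsilon$ (giving $H_1$, $H_2$). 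For $H_4$ the key estimate is the geometric decay of $p_n$:
$$
S_W=\sum_{j\geq n_0}L_0\,\rho^{\,j-n_0}\varepsilon=\frac{L_0\varepsilon}{1-\rho}<\infty,
\qquad
S_V=\sum_{j\geq n_0}L_1\,\rho^{\,j-n_0}\varepsilon=\frac{L_1\varepsilon}{1-\rho}<\infty,
$$
so both sums converge. This establishes the fast enough convergence property, and parts (a) and (b) of the Corollary then follow verbatim from the corresponding parts of Theorem \ref{propoprob1identitatv2} (with $f_0(u_*)=f_0(0)$ in the notation used there).

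The main obstacle, and the genuinely new content beyond Theorem \ref{propoprob1identitatv2}, is the final surjectivity statement: given any prescribed target $(x_*,u_*)$ on the fiber, produce an initial condition whose orbit converges to it. The natural approach is to exploit the freedom in $x_0$. Theorem \ref{propoprob1identitatv2} already produces, for the fixed $u_0$, a limit $\ell(x_0,u_0)$; the plan is to show this limit depends on $x_0$ in a controllable (in fact affine) way. Writing out the recurrence for $x_n$ explicitly as
$$
x_n=\Big(\prod_{k=0}^{n-1}f_1(u_k)\Big)x_0+\sum_{k=0}^{n-1}\Big(\prod_{i=k+1}^{n-1}f_1(u_i)\Big)f_0(u_k),
$$
I would observe that the coefficient of $x_0$ converges (by the same $S_V<\infty$ argument, the infinite product $\prod_k f_1(u_k)$ converges to some nonzero $C(u_0)$ in case (a), or along even/odd subsequences in case (b)) and the second, $x_0$-independent term converges as well. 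Thus in case (a) the limit has the form $\ell(x_0,u_0)=C(u_0)\,x_0+D(u_0)$ with $C(u_0)\neq0$, which is a bijection in $x_0$; choosing a fixed $u_0$ in the basin (for instance $u_0=u_*$ itself, where $C=1$, $D=0$, giving $\ell=x_0$ immediately) and solving $C(u_0)x_0+D(u_0)=x_*$ for $x_0$ yields the required initial condition. The case $f_1(u_*)=-1$ is analogous using the even-indexed subsequence limit. The delicate point to check carefully is that $C(u_0)\neq0$, which follows because each $|f_1(u_k)|$ is bounded away from $0$ for large $k$ (as $f_1(u_k)\to\pm1$) and the finitely many initial factors are nonzero provided $u_0$ is chosen so that no $f_1(u_k)$ vanishes; taking $u_0=u_*$ sidesteps this entirely and is the cleanest route.
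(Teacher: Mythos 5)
Your treatment of parts (a) and (b) coincides with the paper's own proof: both arguments use hyperbolicity to get a geometric majorant $p_n$ (the paper takes $p_n=\mu^n|u_0-u_*|$ with $\mu=\sup_{|u-u_*|<\varepsilon}|\phi'(u)|<1$), use the mean value theorem to get the linear bounds $W(\nu)=L_0\nu$, $V(\nu)=L_1\nu$ from the suprema of $|f_0'|,|f_1'|$, check $H_1$--$H_4$ via geometric series, and then quote Theorem \ref{propoprob1identitatv2}. You are in fact slightly more careful than the paper in observing that the local contractivity required by Theorem \ref{propoprob1identitatv2}(b) is supplied by Proposition \ref{expans}(c). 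Where you genuinely diverge is the final (``furthermore'') claim. The paper linearizes $F(x,u)=(f_0(u)+f_1(u)x,\phi(u))$ at $(x_*,u_*)$, notes the eigenvalues are $f_1(u_*)$ and $\phi'(u_*)$ with $|\phi'(u_*)|<1$, and invokes the stable manifold theorem to produce a $\mathcal{C}^1$ invariant curve transversal to the fiber, all of whose points converge to $(x_*,u_*)$. You instead use the variation-of-constants formula to show $\ell(x_0,u_0)=C(u_0)x_0+D(u_0)$ with $C(u_0)=\prod_k f_1(u_k)\ne 0$, so that $x_0\mapsto\ell(x_0,u_0)$ is an affine bijection of $\R$ and every target $x_*$ is attained. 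Your route is more elementary (no invariant manifold theory) and exhibits the affine structure of the limit map; the paper's route buys the finer geometric picture of a smooth stable curve through each point of the fiber. Both are correct.

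Two cautions on your write-up. First, discard the ``cleanest route'' $u_0=u_*$: a point of the fiber does lie, vacuously, in the basin of the fiber, so it satisfies the letter of the statement in case (a), but it empties the claim of dynamical content, and in case (b) it fails outright, because an on-fiber point with $x_*\ne f_0(u_*)/2$ is genuinely $2$-periodic and its full orbit does not converge. Keep instead the version with $u_0\ne u_*$ chosen close enough to $u_*$ that $V(p_k)=L_1p_k<1$ for all $k$; then every factor $f_1(u_k)$ is nonzero and $C(u_0)\ne 0$ is automatic, with no need to worry about zeros of $f_1$ along a long transient. Second, in case (b) the ``furthermore'' can only be meant along even iterates (run your argument on $F^2$, whose fiber data satisfy $F_0(u_*)=0$, $F_1(u_*)=1$): if the full sequence converged to $(x_*,u_*)$, passing to the limit in the recurrence would force $x_*=f_0(u_*)-x_*$, i.e.\ $x_*=f_0(u_*)/2$. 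This looseness is already present in the paper (whose Jacobian computation tacitly assumes $f_1(u_*)=1$), but your proof should state explicitly that case (b) is handled through $F^2$ and subsequence limits.
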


\section{Proofs of the main results}\label{proofs}

In this section we prove the main results of the paper,
Proposition~\ref{propoprob1attractor},
Theorem~\ref{propoprob1identitatv2} and
Corollary~\ref{corolaridiff}.

\begin{proof}[Proof of Proposition \ref{propoprob1attractor}] First observe that, from the continuity of $f_1$, there exists
$\varepsilon$ such that  for all $u$ such that
$|u-u_*|<\varepsilon$, we have $|f_1(u)|< N$ with $N<1$. Consider
$(x_0,u_0)$ such that $u_0$ is in the basin of attraction of
$u=u_*$. From the hypotheses, we can assume that there exists $n_0$
such that for all $n\geq n_0$, we have $|u_n-u_*|<\varepsilon$
hence, since $f_0$ is continuous,
$|f_0(u_n)|<M=:\sup_{|u-u_*|<\varepsilon}|f_0(u)|$. Thus we have
$|x_{n+n_0+1}| \leq M+N |x_{n+n_0}|$. Applying this last inequality
we obtain
$$\begin{array}{rl}
|x_{n+n_0+1}| &\leq M+N |x_{n+n_0}|\leq
M+N\left(M+N|x_{n+n_0-1}|\right)\leq
\ldots\leq M\left(\sum\limits_{i=0}^n N^i\right)+N^n|x_{n_0}| \\
& \leq \frac{M}{N-1}+|x_{n_0}|,
  \end{array}
$$
and therefore the sequence is bounded. As a consequence
$i:=\liminf\limits_{n\to \infty}\in\R$ and $s:=\limsup\limits_{n\to
\infty}\in\R$. Hence, there are subsequences
$\{(x_{n_k},u_{n_k})\}\to (i,u_*)$ and $\{(x_{n_j},u_{n_j})\}\to
(s,u_*)$. For these subsequences,  from equation (\ref{problema1}),
and using the continuity of $f_0$ and $f_1$, we have
$$
s=f_0(u_*)+f_1(u_*)s\, \mbox{ and }  i=f_0(u_*)+f_1(u_*)i,
$$
hence we have that $i=s=f_0(u_*)/(1-f_1(u_*))$.
\end{proof}

\begin{proof}[Proof of Proposition \ref{expans}] To prove $(a)$, since $u=u_*$ is a stable attractor,
we can take  $\varepsilon>0$ such that for all $u$ with
$|u-u_*|<\varepsilon,$ $\lim\limits_{n\to\infty}\phi^n(u)=u_*.$ In
particular, $u_*$ is the only fixed point in the neighborhood
$(u_*-\varepsilon,u_*+\varepsilon).$ Observe that since $\phi$
preserves orientation, if $u>u_*$ then $\phi(u)>u_*.$

Assume that statement $(a)$ is not true. Given a sequence
$\{\varepsilon_n\}\to 0$, then for each $n$ with
$\varepsilon_n<\varepsilon$ we can find a point $u_n$ such that
\begin{equation}\label{estrella}
|u_n-u_*|<\varepsilon_n\quad\text{and}\quad |\phi(u_n)-u_*|\ge
|u_n-u_*|.
\end{equation} Taking, if necessary, a subsequence, we can assume that
$u_n>u_*$ for all $n$ (the opposite assumption can be treated in a
similar manner),  and also that the subsequence is monotone
decreasing. Notice that, since $u_n>u_*$ and $\phi$ preserves
orientation, also $\phi(u_n)>u_*$ and the second inequality in
(\ref{estrella}) reads as
$$\phi(u_n)\ge u_n\,\,\text{for all}\,\,n\in\N.$$
On the other hand, we can consider the sequence of iterates by
$\phi$ of one point $w_0>u_*$ such that $w_0-u_*<\varepsilon$,
obtaining $w_n:=\phi^n(w_0)>u_*$ for all $n\in\N$ and that
$\lim\limits_{n\to\infty}w_n=u_*.$ That is, we can choose a
subsequence $w_{k_n}$ with the property that $w_{k_n}<u_n$ and, as
before, being monotone decreasing. Considering the continuous
function   $\phi-Id$ we have that for all $n\in\N,$
$$(\phi-Id)(u_n)=\phi(u_n)-u_n\ge 0 \mbox{ and } (\phi-Id)(w_{k_n})=\phi(w_{k_n})-w_{k_n}<
0.$$ Hence, for all $n\in\N$ it exists $t_n$ such that
$$w_{k_n}<t_n<u_n\,\,,\,\,\phi(t_n)=t_n.$$
It implies that $\lim\limits_{n\to\infty}t_n=u_*$ and $u_*$ is not
isolated as a fixed point of $\phi$. A contradiction with our
assumptions.

In order to see $(b)$ consider the function $\phi(u)=-u-u^2.$ This
map has two fixed points $u=0$ and $u=-2.$ Since $\phi'(-2)=3>1,$
the point $u=-2$ is a repellor of the function $\phi(u).$ The point
$u=0$ is not an hyperbolic fixed point because $\phi'(0)=-1,$ but it
is easy to see that the interval $\left[-2,\frac{1}{4}\right]$ is
invariant under the action of $\phi$ and that for all
$u\in\left(-2,\frac{1}{4}\right)$ and
$\lim\limits_{n\to\infty}\phi^n(u)= 0.$ Then, $u_*=0$ is an
attracting fixed point of $\phi(u).$

On the other hand, if $u>0$ then $\phi(u)$ satisfies
$|\phi(u)|=|u+u^2|>|u|,$ hence condition (\ref{locatr1}) is not
satisfied.

Statement (c) is a simple consequence of the mean value theorem.

%Now assume that $\phi\in\mathcal{C}^1(\mathcal{U})$ and consider
%$\lambda:=|\phi'(u_*)|<1.$ Let $r<R$ be such that $|\phi'(u)|<1$ for
%$u$ with $|u-u_*|<r$ which exits from the continuity of $\phi'.$ By
%the mean value theorem we know that for all $u$ with $|u-u_*|<r,$ it
%exists $\tilde{u}$ in between $u_*$ and $u,$ such that
%$\phi(u)-\phi(u_*)=\phi'(\tilde{u})\,(u-u_*).$ Hence,
%$$|\phi(u)-u_*|=|\phi'(\tilde{u})|\,|u-u_*|<|u-u_*|,$$
%as we wanted to prove.

\end{proof}

\begin{proof}[Proof of Theorem \ref{propoprob1identitatv2}]
In order to prove statement $(a)$ consider $(x_0,u_0)$ such that
$u_0$ is in the basin of attraction of $u=u_*$. First we start
proving that the sequence $x_n$ is bounded. From the hypothesis, if
$\varepsilon>0$ is small enough,  there exists $n_0$ such that for
all $n\geq n_0$, $|u_n-u_*|<\varepsilon$.  Hence, to simplify the
notation, in the following we will assume that the point $(x_0,u_0)$
is such that  $|u_0-u_*|<\varepsilon$. Furthermore,
\begin{enumerate}
  \item[(1)] From the fact that $W(\nu)$ is non decreasing for $0\leq \nu<\varepsilon$, if $n$ is large enough
  $|f_0(u_{n}) -f_0(u_*)|=|f_0(u_{n})|\leq W(|u_{n}-u_*|)\leq W( p_n)\leq \overline{W}.$
  \item[(2)] Analogously  $|f_1(u_{n})|\leq |f_1(u_*)|+V(|u_{n}-u_*|)\leq
  1+V(p_n)$.
\end{enumerate}
In summary, we have $|x_{n+1} -f_0(u_*)| \leq
W(p_n)+\left(1+V(p_n)\right) |x_{n}|$. Applying this last inequality
we obtain
$$\begin{array}{rl} |x_{n+1} -f_0(u_*)| &\leq W(p_n)+\left(1+V(p_n)\right) |x_{n}|\\
& \leq
  W(p_n)+\left(1+V(p_n)\right)W(p_{n-1})+\left(1+ V(p_n)\right)\left(1+V(p_{n-1})\right)|x_{n-1}|\\
& \leq \ldots \leq W(p_n)+ \sum\limits_{i=0}^{n-1}
\left[W(p_i)\,\prod\limits_{j=i+1}^{n}
\left(1+V(p_j)\right)\right]+\prod\limits_{j=0}^{n}
\left(1+V(p_j)\right)|x_{0}|.
  \end{array}
$$
Observe that from hypothesis $H_2$ we have $V(0)=0$, and by
hypothesis $H_3$, for all $j=0,\ldots n$ we have that $1+V(p_j)\geq
1$. So
$$
\prod\limits_{j=i+1}^{n} 1+V(p_j)\leq \prod\limits_{j=0}^{n}
1+V(p_j)\leq \prod\limits_{j=0}^{\infty} 1+V(p_j)=:P.
$$
Observe that $P<+\infty$, because
$$
S=\ln(P)=\sum\limits_{j=0}^{\infty}\ln\left( 1+V(p_j)\right)\sim
\sum\limits_{j=0}^{\infty} V(p_j)=S_V<+\infty
$$
Hence, regarding that if  $n$ is large enough then  $W( p_n)\leq
  \overline{W}$, we have
$$
|x_{n+1} -f_0(u_*)|\leq \overline{W} +P
\left(\sum\limits_{i=0}^{n-1} W(p_i)\right)+P\,|x_{0}|\leq
\overline{W} +P \, S_W+P\,|x_{0}|,
$$
so the sequence $\{x_n\}$ is bounded, and therefore also the
sequence $\{(x_n,u_n)\}$ is bounded.

Now we are going to see  that $\{x_n\}$ is a Cauchy sequence and
therefore it has a limit $\ell(x_0,u_0)$. First observe that if $R$
is a bound of $\{|x_n|\}$ then
$$
|x_{n+1}-x_{n}|\leq |f_0(u_{n})|+|f_1(u_{n})-1|\, |x_{n}|\leq
W(p_n)+V(p_n)R.
$$
Therefore, since the series $S_W$ and $S_V$ are convergent, then for
all $\varepsilon>0$ there exists $N>0$ such that for all $n, m>N$
$$\begin{array}{rl}
  |x_{n+m}-x_{n}|&\leq |x_{n+m}-x_{n+m-1}|+\ldots+|x_{n+1}-x_{n}| \\
    & \leq
    \sum\limits_{j=n}^{n+m-1} W(p_j)+R\sum\limits_{j=n}^{n+m-1} V(p_j)\leq
    \varepsilon,
  \end{array}
$$
and therefore $\{x_n\}$ is a Cauchy sequence, which completes the
proof of statement (a).

In order to see $(b)$, observe that
$$
\begin{array}{rl}
    x_{n+2}&=f_0(u_{n+1})+f_1(u_{n+1})\left(f_0(u_{n})+f_1(u_{n})
x_{n}\right)\\&=:F_0(u_{n})+F_1(u_{n})x_{n},
  \end{array}
$$
where $F_0(u)=f_0(\phi(u))+f_1(\phi(u))f_0(u)$ and
$F_1(u)=f_1(\phi(u))f_1(u)$. After renaming $y_{n}=x_{2n}$,
 $v_{n}=u_{2n},$ $v_*=u_*$ and $\varphi=\phi\circ\phi$ we get the
system
\begin{equation}\label{dospassosy}
 \left\{
  \begin{array}{ll}
     y_{n+1}&=F_0(v_{n})+F_1(v_{n})y_{n}, \\
    v_{n+1} & = \varphi(v_{n}),
  \end{array}
\right.
\end{equation}
which is a system of type (\ref{problema1}). Notice that when we
take $(x_0,u_0)$ (resp. $(x_1,u_1)$) as initial condition and we
apply (\ref{dospassosy}) repeatedly we get $(x_{2n},u_{2n})$ (resp.
$(x_{2n+1},u_{2n+1})$).

We are going to see that system (\ref{dospassosy}) satisfies the
hypothesis $H_1$--$H_4$, and since $F_0(v_*)=0$ and $F_1(v_*)=1$,
the result will follow from the convergence of the sequence
$\{(y_n,v_n)\}$ guaranteed by statement (a).

Adding and subtracting both $f_0(v_*)$ and $f_1(\phi(v))f_0(v_*)$ to
$F_0(v)$ we get,
$$\begin{array}{rl}
|F_0(v)|&\leq
|f_0(\phi(v))-f_0(v_*)|+|f_1(\phi(v))|\,|f_0(v)-f_0(v_*)|+|f_0(v_*)\left(f_1(\phi(v))+1\right)|\\
&\leq W(|\phi(v)-v_*|) +|f_1(\phi(v))| \,W(|v-v_*|) +|f_0(v_*)|\,
V(|\phi(v)-v_*|).
\end{array}
$$
Since $u_*$ is a stable attractor of $u_{n+1}=\phi(u_{n})$ and
$\phi(u)$ is locally contractive at $u=u_*,$ if
$|v-v_*|<\varepsilon$ then $|\phi(v)-v_*|<|v-v_*|$. Using that $W$
and $V$ satisfy $H_3$, we get
$$\begin{array}{rl}
|F_0(v)|&\leq W(|v-v_*|) + |f_1(\phi(v))| \,W(|v-v_*|)
+|f_0(v_*)|\, V(|v-v_*|)\\
&\leq A\, W(|v-v_*|)+ B\, V(|v-v_*|),
\end{array}
$$
where $A:=1+\sup_{|v-v_*|\leq\varepsilon}|f_1(\phi(v))|$ and
$B:=|f_0(v_*)|$. Now we obtain a new function $\widetilde{W}(\nu):=A
W(\nu)+ B V(\nu)$, satisfying $H_1$ for system (\ref{dospassosy}).

On the other hand, using the inequality $|ab-1|=|a(b+1)-(a+1)|\leq
|a|\,|b+1|+|a+1|$, we have
$$\begin{array}{rl}
|F_1(v)-1|&= |f_1(\phi(v))f_1(v)-1|\leq
|f_1(v)|\,|f_1(\phi(v))+1|+|f_1(v)+1|\\
&\leq |f_1(v)|\, V(|\phi(v)-v_*|)+ V(|v-v_*|)\\
&\leq |f_1(v)|\, V(|v-v_*|)+ V(|v-v_*|)\\&\leq C\, V(|\phi(v)-v_*|),
  \end{array}
$$
where $C:=\sup_{|v-v_*|\leq\varepsilon} |f_1(v)|$, obtaining the new
function $\widetilde{V}(\nu):=C V(\nu)$, satisfying $H_2$ for
system~(\ref{dospassosy}).

Now, it is straightforward to prove that $\widetilde{V}(v)$ and
$\widetilde{W}(v)$ satisfy hypothesis $H_3$  and $H_4$, hence each
sequence $\{y_{n}\}$ is convergent, and therefore the sequences
$\{x_{2n}\}$ and $\{x_{2n+1}\}$ are convergent.

Finally, observe that setting
$\ell_e(x_0,u_0):=\lim\limits_{n\to\infty} x_{2n}$ and
$\ell_o(x_0,u_0):=\lim\limits_{n\to\infty} x_{2n+1}$, and using the
continuity of $f_0$ and $f_1$ and equation (\ref{problema1}) we
obtain that $\ell_o=f_0(u_*)-\ell_e$.
\end{proof}

\medskip

\begin{proof}[Proof of Corollary \ref{corolaridiff}]
 If  $u=u_*$ is a hyperbolic
attractor of $\phi$, then for $\varepsilon$ small enough, we have
that $\mu:=\sup_{|u-u_*|<\varepsilon}|\phi'(u)|<1$. Setting
$p_n=\mu^n |u_0-u_*|$, $W(\nu):=A |\nu|$, and
  $V(\nu):=B |\nu|$, where $A$ and $B$ are the supremum in
$|u-u_*|<\varepsilon$ of $|f_0'|$ and $|f_1'|$ respectively, and
using the  mean value theorem,  one gets straightforwardly, that the
Hypothesis $H_1$--$H_4$ are satisfied. So  statements (a) and (b)
follow from Theorem \ref{propoprob1identitatv2}.

% If  $u=u_*$ is a hyperbolic
%attractor of $\phi$, then $|\phi'(u_*)|<1$, hence for $\varepsilon$
%small enough, and using the  mean value theorem, we have that if
%$|u-u_*|<\varepsilon$ then $|\phi(u)-u_*|\leq \mu |u-u_*|$,
%$|f_0(u)-f_0(u_*)|\leq A|u-u_*|$ and $|f_1(u)-f_1(u_*)|\leq
%B|u-u_*|$, where $\mu<1$, $A$ and $B$ are the supremum in
%$|u-u_*|<\varepsilon$ of $|\phi'|$, $|f_0'|$ and $|f_1'|$
%respectively. Setting $p_n=\mu^n |u_0-u_*|$, $W(\nu):=A |\nu|$, and
%  $V(\nu):=B |\nu|$, it can be checked,  straightforwardly, that the Hypothesis $H_1$--$H_4$ are satisfied. So  statements (a) and (b) follow from
%Theorem \ref{propoprob1identitatv2}.

Observe that for each point $(x_*,u_*)\in\{u=u_*\}$, the
differential matrix of the map associated to system
(\ref{problema1}), $F(x,u)=(f_0(u)+f_1(u)x,\phi(u))$, is given by
$$
DF(x_*,u_*)=\left(
              \begin{array}{cc}
                1 & f_0'(u_*)+f_1'(u_*)x_* \\
                0 & \phi'(u_*) \\
              \end{array}
            \right)
,
$$
where $|\phi'(u_*)|<1$. Hence from the  stable manifold theorem (see
\cite{BV} or \cite{GH} for instance) there is an invariant
$\mathcal{C}^1$ curve, transversal to $\{u=u_*\}$ at $(x_*,u_*)$,
such that any initial condition on this curve gives rise to a
solution with limit $(x_*,u_*)$.
\end{proof}

From the above proof, we notice that the statements (a) and (b) in
Corollary \ref{corolaridiff} are also satisfied in the case that
$\phi$ is not differentiable but $|\phi(u)-u_*|\leq \mu |u-u_*|$
with $0<\mu<1$, for those values of $u$ such that
$|u-u_*|<\varepsilon$, for $\varepsilon>0$ small enough.

\medskip

\section{Applications}\label{Aplications1}
\subsection{Linear quasi-homogeneous maps}\label{quasi1}

 We say that $f:\R^2\longrightarrow \R$ is a {\it quasi-homogeneous
 function} with weights $(\alpha,\beta)$ and quasi-degree $d$ if
 $$f(\lambda^\alpha\,x,\lambda^\beta\,y)=\lambda^d\,f(x,y),\mbox{ for all }\lambda>0.$$ Notice that if $f$ is a quasi-homogeneous
 function with weights $(\alpha,\beta)$ then, for all constant $c\ne
 0$, $f$ is also a quasi-homogeneous
 function  with weights $(c\alpha,c\beta).$
Hence we will consider $(\alpha,\beta)\in\Z^2$ with
gcd$(\alpha,\beta)=1.$

 We say that $F=(f,g):\R^2\longrightarrow \R^2$ is a {\it quasi-homogeneous
map} with weights $(\alpha,\beta)$ and quasi-degree $d$ if  $f(x,y)$
and $g(x,y)$ are  quasi-homogeneous functions with weights
$(\alpha,\beta)$ and quasi-degrees $d\alpha$ and $d\beta$,
respectively. This section deals with quasi-homogeneous maps of
quasi-degree $1$ (which are called \emph{linear quasi-homogeneous
maps}) with weights of different sign.

Assuming that $\alpha>0$ and  $\beta<0,$ from the definition it
follows that in the class of analytic maps, any linear
quasi-homogeneous map takes the form
$$F(x,y)=\left(x\,p(x^{-\beta}y^{\alpha}),y\,q(x^{-\beta}y^{\alpha})\right),$$
where $p(z)$ and $q(z)$ are analytic functions. These maps preserve
the fibration given by $\mathcal{F}=\{x^{-\beta}y^{\alpha}=h,$
$h\in\R\}$, since they send the curves $x^{-\beta}y^{\alpha}=h$ to
the curves $x^{-\beta}y^{\alpha}=h\,p(h)^{-\beta}q(h)^{\alpha}.$ The
dynamical system associated to $F$ is:
$$
\left\{
 \begin{array}{rl}
x_{n+1}&=x_n\,p(x_n^{-\beta}y_n^{\alpha}),\\
y_{n+1}&=y_n\,q(x_n^{-\beta}y_n^{\alpha}).
  \end{array}
\right.
$$

Applying the transformation $(x,y)\rightarrow
(x,x^{-\beta}y^{\alpha})$ and calling $u=x^{-\beta}y^{\alpha},$the
map $F(x,y)$ is transformed into
$$\tilde
F(x,u)=\left(x\,p(u),u\,p(u)^{-\beta}\,q(u)^{\alpha}\right),$$ which
is a triangular map, whose corresponding system is:
$$
\left\{
  \begin{array}{rl}
x_{n+1}&=x_n\,p(u_n),\\
u_{n+1}&=u_n\,p(u_n)^{-\beta}q(u_n)^{\alpha}.
  \end{array}
\right.
$$
Notice that such a system is of the form (\ref {problema1}) and the
fiber $u=0$ is invariant. In order to apply Proposition
\ref{propoprob1attractor} to study the basin of attraction of the
origin, we need that $|p_0|<1$ and that $u=0$ is an attractive point
of the subsystem $u_{n+1}=u_n\,p(u_n)^{-\beta}q(u_n)^{\alpha},$
which is guaranteed by the condition
$|p_0^{-\beta}\,q_0^{\alpha}|<1,$ where $p_0=p(0)$ and $q_0=q(0).$
So, in this case we have $\lim\limits_{n\to\infty}(x_n,u_n)=(0,0)$
for all $u_0\in\mathcal{B},$ where $\mathcal{B}$ is the basin of
attraction of $u=0,$ and any arbitrary value of $x_0$.

On the other hand, if we perform the similar change
$(x,y)\rightarrow (x^{-\beta}y^{\alpha},y)$ and we call
$u=x^{-\beta}y^{\alpha},$ the map $F(x,y)$ is transformed into
another triangular map
$$\bar
F(u,y)=\left(u\,p(u)^{-\beta}\,q(u)^{\alpha},y\,q(u)\right),$$ whose
associated system is:
$$
\left\{
  \begin{array}{rl}
u_{n+1}&=u_n\,p(u_n)^{-\beta}q(u_n)^{\alpha},\\
y_{n+1}&=y_n\,q(u_n).
  \end{array}
\right.
$$

As before we can apply Proposition \ref{propoprob1attractor}. Thus,
if $|q_0|<1$ and $|p_0^{-\beta}\,q_0^{\alpha}|<1,$ then
$\lim\limits_{n\to \infty}(u_n,y_{n})= (0,0)$ for all
$u_0\in\mathcal{B},$ where $\mathcal{B}$ is the basin of attraction
of $u=0,$ for the subsystem
$u_{n+1}=u_n\,p(u_n)^{-\beta}\,q(u_n)^\alpha$ and $y_0$ is
arbitrary. Hence, we get:

\begin{propo}\label{hom}
Assume that $|p_0|<1$ and $|q_0|<1$ and let $\mathcal{B}$ be the
basin of attraction of $u=0$ for the one-dimensional system
$u_{n+1}=u_n\,p(u_n)^{-\beta}q(u_n)^{\alpha}.$ Then for all
$(x_0,y_0)$ such that $x_0^{-\beta}\,y_0^{\alpha}\in\mathcal{B},$
the sequence $\{(x_n,y_n)\}$ tends to $(0,0)$ as $n\to\infty.$
\end{propo}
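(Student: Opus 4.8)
The plan is to reduce the two-dimensional claim about $\{(x_n,y_n)\}$ to the one-dimensional convergence already packaged in Proposition \ref{propoprob1attractor}, applied to the two triangular presentations of the map $F$ that the preceding discussion has constructed. The key observation is that both $x_n$ and $y_n$ are governed, through $u_n$, by multiplicative recurrences whose multipliers $p(u_n)$ and $q(u_n)$ converge to $p_0$ and $q_0$ with $|p_0|<1$ and $|q_0|<1$.

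First I would fix $(x_0,y_0)$ with $u_0:=x_0^{-\beta}y_0^{\alpha}\in\mathcal{B}$ and record that, by definition of $\mathcal{B}$, the sequence $u_n=u_{n-1}\,p(u_{n-1})^{-\beta}q(u_{n-1})^{\alpha}$ satisfies $u_n\to 0$. Next I would handle $x_n$: the change $(x,y)\mapsto(x,u)$ turns $F$ into the triangular system
\begin{equation*}
x_{n+1}=x_n\,p(u_n),\qquad u_{n+1}=u_n\,p(u_n)^{-\beta}q(u_n)^{\alpha},
\end{equation*}
which is of the form (\ref{problema1}) with $f_0(u)\equiv 0$ and $f_1(u)=p(u)$. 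Since $f_0,f_1$ are continuous (indeed analytic), $|f_1(0)|=|p_0|<1$, and $u=0$ is an attractive point of the $u$-subsystem with $u_0$ in its basin, Proposition \ref{propoprob1attractor} gives $\lim_{n\to\infty}(x_n,u_n)=(f_0(0)/(1-f_1(0)),0)=(0,0)$; in particular $x_n\to 0$. I would then run the symmetric argument on the other presentation $(x,y)\mapsto(u,y)$, which yields the triangular system
\begin{equation*}
u_{n+1}=u_n\,p(u_n)^{-\beta}q(u_n)^{\alpha},\qquad y_{n+1}=y_n\,q(u_n),
\end{equation*}
again of type (\ref{problema1}) with $f_1(u)=q(u)$ and $f_0\equiv 0$; since $|q_0|<1$, Proposition \ref{propoprob1attractor} forces $y_n\to 0$. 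Combining the two conclusions gives $(x_n,y_n)\to(0,0)$.

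The only genuine subtlety — and the step I would treat most carefully — is the \emph{consistency} of the two reductions: both invoke convergence of the same auxiliary sequence $\{u_n\}$ generated by $u_{n+1}=u_n\,p(u_n)^{-\beta}q(u_n)^{\alpha}$, so I must check that the hypothesis $u_0\in\mathcal{B}$ is exactly what guarantees $u_n\to 0$ in both applications, and that this single $u$-orbit is the one driving both the $x$- and the $y$-recurrences (it is, since $u_n=x_n^{-\beta}y_n^{\alpha}$ is preserved by $F$). A minor point to flag is that the substitutions $u=x^{-\beta}y^{\alpha}$ presuppose the expressions are well defined along the orbit; with $\alpha>0$, $\beta<0$ and $\gcd(\alpha,\beta)=1$ one should note the orbit stays in the region where the fibration coordinates make sense, but this is inherited from the setup of Section \ref{quasi1} and requires no new work. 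With those remarks in place the proof is a direct double application of Proposition \ref{propoprob1attractor}.
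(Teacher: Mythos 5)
Your proposal is correct and is essentially the paper's own argument: the paper obtains Proposition~\ref{hom} precisely by applying Proposition~\ref{propoprob1attractor} twice, once to the triangular presentation $(x,u)\mapsto\left(x\,p(u),\,u\,p(u)^{-\beta}q(u)^{\alpha}\right)$ to get $x_n\to 0$ and once to $(u,y)\mapsto\left(u\,p(u)^{-\beta}q(u)^{\alpha},\,y\,q(u)\right)$ to get $y_n\to 0$, with the same orbit $u_n=x_n^{-\beta}y_n^{\alpha}$ driving both. The only step the paper spells out that you assert without verification is that $u=0$ is indeed an attracting fixed point of the $u$-subsystem; this follows because $\alpha>0$ and $-\beta>0$ give $|p_0^{-\beta}q_0^{\alpha}|=|p_0|^{-\beta}|q_0|^{\alpha}<1$.
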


\medskip

\noindent \textsl{Example F.} As a particular easy case we can take
$(\alpha,\beta)=(1,-1)$ and
\begin{equation}\label{qh3}
\left\{
 \begin{array}{rl}
x_{n+1}&=x_n\,(a+bx_ny_n),\\
y_{n+1}&=y_n\,(c+dx_ny_n),
  \end{array}
\right.
\end{equation}
which is transformed in:
$$
\left\{
  \begin{array}{rl}
x_{n+1}&=x_n\,(a+bu_n),\\
u_{n+1}&=u_n\,(a+bu_n)\,(c+du_n),
  \end{array}
\right.
$$
with $|a|<1$ and $|c|<1.$ To determine the basin of attraction of
$u=0$ for arbitrary values of $a,b,c,d$ is not an easy task. In
fact, the dynamics of the one dimensional system $u_{n+1}=\phi(u_n)$
with $\phi(u):=u(a+bu)(c+du)$ can be very complicated. For instance,
if such map has a $3$-periodic point, then it has periodic points
for all the periods. It means that the \emph{intrafibration}
dynamics of the hyperbolas $xy=h$ for system (\ref{qh3}) can be
complicated.

One case for which the dynamics of $u_{n+1}=\phi(u_n)$ is simple is
just when we have three fixed points with alternating stability. For
instance, if we consider   $a=2/(3d),b=-1/(6d)$ and $c=d,$ then we
get three fixed points $u=0,u=1$ and $u=2.$ Moreover
$\phi'(0)=\phi'(2)=2/3$ so $0$ and $2$ are attractive points  and
$\phi'(1)=7/6$ and $1$ is a repelling point.

There are three preimages of the fixed point $1,$ that are $1,$
$p_1=1-\sqrt{7}$ and $p_2=1+\sqrt{7}$.
 It easily follows that $I_1:=(p_1,1)$ is
contained in $\mathcal{B}_0,$
 the basin of attraction of $u=0,$ while $J_1:=(1,p_2)$ is contained in $\mathcal{B}_2,$
 the basin of attraction of $u=2.$ Setting
$I_{2}=\phi^{-1}(I_1)\setminus I_1$ and  $I_{n+1}=\phi^{-1}(I_n)$
for $n>2$; and $J_{2}=\phi^{-1}(J_1)\setminus J_1$,
$J_{n+1}=\phi^{-1}(J_n)$ we have that these intervals are interlaced
as
$$\ldots
I_7,\,J_6,\,I_5,\,J_4,\,I_3,\,J_2,\,I_1,\,J_1,\,I_2,\,J_3,\,I_4,\,J_5,\,I_6,\,\ldots$$
where the right extreme of each interval $J_{2k}$ equals the left
one of $I_{2k-1}$ and the right extreme of each interval $J_{2k-1}$
equals the left one of $I_{2k}.$ These boundary points are exactly
the preimages of the fixed point $u=1.$ Furthermore, these intervals
have decreasing length, tending to the two-periodic points
$q_\pm=1\pm\sqrt{13}.$

It is very easy to prove that $\phi$ sends  $\R\setminus[q_-,q_+]$
to itself. Collecting all the above observations we get:

\begin{lem}\label{l10}
Consider the one dimensional discrete system generated by
$$\phi(u)=\frac{1}{6}\,u\,\left(4-u\right)\,\left(1+u\right).$$ Let
$\mathcal{B}_0$(resp. $\mathcal{B}_2$ or $\mathcal{B}_\infty$) be
the basin of attraction of $u=0$ (resp. $u=2$ or infinity) and let
$\mathcal{O}_{1}:= \cup_{n\in\N}\phi^{-n}(1)$ be, the set of
preimages of the repulsive fixed point $u=1.$ Then
$$\mathcal{B}_0=\cup_{n\in\N}\phi^{-n}(I_1),\quad\mathcal{B}_2=\cup_{n\in\N}\phi^{-n}(J_1)\quad\text{and}\quad
\mathcal{B}_0\cup\mathcal{B}_2\cup \mathcal{O}_{1}=(q_-,q_+),$$
where $q_\pm=1\pm\sqrt{13}$ is the unique orbit of $\phi(u)$ with
minimal period $2$. Moreover
$$\mathbb{R}=\mathcal{B}_0\cup\mathcal{B}_2\cup \mathcal{O}_{1}\cup
\mathcal{B}_\infty\cup\{p_-,p_+\}.$$
\end{lem}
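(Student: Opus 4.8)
The plan is to decompose $\R$ according to the sign of $\phi-\mathrm{id}$ and the position of the unique repelling $2$-cycle, which is exactly the pair $\{p_-,p_+\}=\{q_-,q_+\}$ appearing in the statement. The elementary facts I would record first are: $\phi(u)-u=-\tfrac16\,u(u-1)(u-2)$, so the fixed points are $0,1,2$ with $\phi'(0)=\phi'(2)=\tfrac23$ (attractors) and $\phi'(1)=\tfrac76$ (repellor); the equation $\phi(u)=1$ has the three roots $1,\ p_1=1-\sqrt7,\ p_2=1+\sqrt7$; the critical points are $u_\pm=1\pm\tfrac{\sqrt{21}}{3}$; and a direct computation gives $\phi(q_\pm)=q_\mp$ together with $\phi'(q_-)\phi'(q_+)=(16/3)^2=256/9>1$, so $\{q_-,q_+\}$ is a repelling $2$-cycle. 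The single inequality that makes every interval inclusion below work is $\tfrac{7\sqrt{21}}{27}<\sqrt7$, equivalently $7\sqrt3<27$.

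Next I would establish the immediate basins. From $\phi(u)-1=-\tfrac16(u-p_1)(u-1)(u-p_2)$ one sees $\phi<1$ on $I_1=(p_1,1)$ and $\phi>1$ on $J_1=(1,p_2)$, while the inequality above shows that the extreme values $\phi(u_-)$ and $\phi(u_+)$ stay inside these intervals; hence $[p_1,1]$ and $[1,p_2]$ are both forward invariant and, since $q_-<p_1<1<p_2<q_+$, neither contains a point of the $2$-cycle. A continuous self-map of a compact interval without a periodic point of period $2$ has all orbits converging to fixed points (a classical theorem of Coppel); applying this on $[p_1,1]$ and $[1,p_2]$, whose only interior fixed points are $0$ and $2$ respectively and whose only preimages of the repellor $1$ are the endpoints, yields $I_1\subset\mathcal{B}_0$ and $J_1\subset\mathcal{B}_2$. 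Since $0\in I_1$ and $2\in J_1$ and the basins are open, any orbit tending to $0$ (resp. $2$) must eventually enter $I_1$ (resp. $J_1$), which gives $\mathcal{B}_0=\bigcup_n\phi^{-n}(I_1)$ and $\mathcal{B}_2=\bigcup_n\phi^{-n}(J_1)$.

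For the global statement I would treat the exterior and the interior separately. Using that $\phi$ is (decreasing) monotone on $(-\infty,u_-]$ and on $[u_+,\infty)$ and that $\phi(q_\pm)=q_\mp$, the set $\R\setminus[q_-,q_+]$ is mapped into itself; and because $q_+$ is a repelling fixed point of $\phi^2$ with no further fixed point of $\phi^2$ beyond it, $\phi^2(u)>u$ for $u>q_+$, so all these orbits are unbounded and $\mathcal{B}_\infty=\R\setminus[q_-,q_+]$. For the interior I would introduce the compact interval $K_0=[\phi(u_-),\phi(u_+)]$, which is forward invariant (the sign of $\phi-\mathrm{id}$ at its endpoints pushes inward), is contained in $(q_-,q_+)$, contains $0,1,2$, and contains no point of the $2$-cycle; Coppel's theorem then gives that every orbit in $K_0$ tends to $0$, $1$ or $2$, and convergence to the hyperbolic repellor $1$ forces the orbit to be eventually equal to $1$, i.e.\ to lie in $\mathcal{O}_1$. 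The main obstacle is to show that every orbit starting in $(q_-,q_+)$ eventually enters $K_0$: the complement $(q_-,q_+)\setminus K_0$ consists of two tails abutting the repelling $2$-cycle, and I would use the expansion factor $256/9>1$ (so that $\phi^2$ strictly increases the distance to $\{q_-,q_+\}$ near the endpoints) to force every orbit out of the tails in finitely many steps. This gives $(q_-,q_+)\subseteq\mathcal{B}_0\cup\mathcal{B}_2\cup\mathcal{O}_1$, and the reverse inclusion is immediate since these three sets lie inside the forward-invariant $(q_-,q_+)$.

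Collecting the three invariant pieces, $\R=(\R\setminus[q_-,q_+])\sqcup\{q_-,q_+\}\sqcup(q_-,q_+)=\mathcal{B}_\infty\cup\{p_-,p_+\}\cup(\mathcal{B}_0\cup\mathcal{B}_2\cup\mathcal{O}_1)$, which is the asserted decomposition, with $\{p_-,p_+\}=\{q_-,q_+\}$ the repelling $2$-cycle. The only genuinely delicate point in the whole argument is the absorption into $K_0$; everything else reduces to the sign analysis of $\phi-\mathrm{id}$, the explicit location of the critical values, and two applications of Coppel's convergence theorem.
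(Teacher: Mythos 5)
Your proposal takes a genuinely different route from the paper's: the paper never uses Coppel's theorem or a trapping interval, but instead tiles $(q_-,q_+)$ by the interlaced preimage intervals $I_n$, $J_n$ of $I_1=(p_1,1)$ and $J_1=(1,p_2)$, whose endpoints are the successive preimages of the repellor $1$ and which accumulate on $q_\pm$, and reads the lemma off this tiling. The computations you record are all correct ($\phi(u_\pm)=1\pm\frac{7\sqrt{21}}{27}$, $\phi(q_\pm)=q_\mp$, $\phi'(q_\pm)=-16/3$, the inequality $7\sqrt{3}<27$), the forward invariances of $[p_1,1]$, $[1,p_2]$ and $K_0$ hold, and the identities $\mathcal{B}_0=\cup_{n}\phi^{-n}(I_1)$, $\mathcal{B}_2=\cup_{n}\phi^{-n}(J_1)$ are correctly derived.

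There is, however, a genuine gap, sitting exactly where you put the weight of the proof. You record as an ``elementary fact'' that $\{q_-,q_+\}$ is the \emph{unique} $2$-cycle, but your computations only show that it \emph{is} a repelling $2$-cycle; uniqueness is itself one of the assertions of the lemma, and your argument consumes it three times: in Coppel's no-period-two hypothesis on $[p_1,1]$, $[1,p_2]$ and $K_0$; in ``no further fixed point of $\phi^2$ beyond $q_+$'' (needed for $\mathcal{B}_\infty=\mathbb{R}\setminus[q_-,q_+]$); and, decisively, in the absorption into $K_0$. For the absorption, the expansion factor $(\phi^2)'(q_\pm)=256/9$ cannot do the job you assign to it: it is a purely local statement near the $2$-cycle and cannot exclude further fixed points of $\phi^2$ (other, possibly attracting, $2$-cycles of $\phi$) in the middle of the tails $(q_-,\phi(u_-))$ and $(\phi(u_+),q_+)$, which would trap orbits away from $K_0$ forever. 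What the absorption actually requires is the global sign condition $\phi^2>\mathrm{id}$ on $(q_-,\phi(u_-)]$ and $\phi^2<\mathrm{id}$ on $[\phi(u_+),q_+)$, and that is precisely the uniqueness you assumed, so the argument as written is circular. Parts of this are cheap to repair: $\phi$ is increasing on $K_0\subset(u_-,u_+)$, so $K_0$ carries no $2$-cycle (Coppel is not even needed there), and $\phi((0,1))=(0,1)$, $\phi((1,2))=(1,2)$ exclude $2$-cycles in $[p_1,1]$ and $[1,p_2]$. But the tails and the exterior need an actual uniqueness proof, for instance: writing $s=a+b$, $p=ab$ for a $2$-cycle, the conditions $\phi(a)+\phi(b)=s$ and $\phi(a)\phi(b)=p$ give $(s-2)\left(3p-s(s-1)\right)=0$ and $p\left[(16-4s+p)(1+s+p)-36\right]=0$, whose real solutions, besides pairs of fixed points, are $(s,p)=(2,-12)$, i.e.\ your cycle, and $s=2\pm\sqrt{19}$ with $3p=s(s-1)$, for which $s^2-4p=-5<0$, so those cycles are complex. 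With this inserted your scheme does close (an orbit leaving a tail cannot jump over $K_0$, since $\phi\left((u_-,q_+)\right)=(q_-,\phi(u_+)]$ and symmetrically); in fairness, the paper is equally informal at this same point, asserting rather than proving that its interlaced intervals shrink and accumulate exactly on $q_\pm$.
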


Once the basin of attraction of $u=0$ is determined, we can come
back to system (\ref{qh3}).

\begin{propo} Let $\mathcal{B}_0$  be the basin of attraction of $u=0$  for
$\phi(u)=u\left(4-u\right)\left(1+u\right)/6.$  For $d\ne0,$
consider the system
\begin{equation}\label{propoexemple}
\left\{
 \begin{array}{rl}
x_{n+1}&=\displaystyle{\frac{1}{6d}}\,x_n\,(4-x_ny_n),\\
y_{n+1}&=d\,y_n\,(1+x_ny_n).
  \end{array}
\right.
\end{equation}
\begin{enumerate}[(i)]
 \item For  $2/3<|d|<1$,
\begin{enumerate}
  \item[(a)] If   $x_0y_0\in\mathcal{B}_0$ then
  $\lim\limits_{n\to\infty}(x_n,y_n)=(0,0);$ see Figure~1.
  \item[(b)] If $x_0y_0\not \in\mathcal{B}_0$ then $\lim\limits_{n\to\infty}|x_n|+|y_n|=\infty.$
\end{enumerate}

\item For $|d|\in\mathbb{R}^+\setminus [2/3,1],$ if $x_0y_0\ne0$ then $\lim\limits_{n\to\infty}|x_n|+|y_n|=\infty.$
\end{enumerate}
\end{propo}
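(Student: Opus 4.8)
The plan is to reduce the planar dynamics to the scalar recurrence $u_{n+1}=\phi(u_n)$ already analysed in Lemma \ref{l10}. Writing $p(u)=\frac{1}{6d}(4-u)$ and $q(u)=d(1+u)$, the system reads $x_{n+1}=p(u_n)\,x_n$, $y_{n+1}=q(u_n)\,y_n$ with $u_n:=x_ny_n$, and one checks directly that $u_{n+1}=x_{n+1}y_{n+1}=\frac16 u_n(4-u_n)(1+u_n)=\phi(u_n)$, independently of $d$. The structural identity I would exploit throughout is $p(u)q(u)=\phi(u)/u$: at every nonzero fixed point $u_*$ of $\phi$ one has $p(u_*)q(u_*)=1$, so the two limiting multipliers are reciprocal; in contrast, at $u_*=0$ one finds $p(0)q(0)=\phi'(0)=2/3$, with $|p(0)|=\frac{2}{3|d|}$ and $|q(0)|=|d|$.

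For (i)(a), when $2/3<|d|<1$ both $|p(0)|<1$ and $|q(0)|<1$, so Proposition \ref{hom} (equivalently Proposition \ref{propoprob1attractor} applied to the $x$- and $y$-subsystems separately) gives $(x_n,y_n)\to(0,0)$ whenever $x_0y_0=u_0\in\mathcal{B}_0$. For (i)(b) I would run through the partition $\mathbb{R}=\mathcal{B}_0\cup\mathcal{B}_2\cup\mathcal{O}_{1}\cup\mathcal{B}_\infty\cup\{q_-,q_+\}$ of Lemma \ref{l10}. On $\mathcal{B}_\infty$ we have $|u_n|=|x_n|\,|y_n|\to\infty$, whence $|x_n|+|y_n|\ge 2\sqrt{|u_n|}\to\infty$. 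On $\mathcal{B}_2$ the limiting multipliers are $|p(2)|=\frac{1}{3|d|}<1<3|d|=|q(2)|$, so Proposition \ref{propoprob1attractor} yields $x_n\to0$ and then $|y_n|=|u_n|/|x_n|\to\infty$ since $u_n\to2\ne0$. On $\mathcal{O}_1$ (resp. on $\{q_-,q_+\}$) the sequence $u_n$ is eventually equal to $1$ (resp. exactly $2$-periodic), so $x_n,y_n$ become explicit geometric sequences with ratios $p(1),q(1)$ where $|q(1)|=2|d|>1$ (resp. with $|q(q_-)q(q_+)|=9|d|^2>1$), forcing $|y_n|\to\infty$. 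All of this is clean because the range $2/3<|d|<1$ keeps every relevant multiplier off the unit circle.

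For (ii) the same case analysis applies with $|d|\in\mathbb{R}^+\setminus[2/3,1]$. On $\mathcal{B}_0$ the multipliers are no longer reciprocal: if $|d|>1$ then $|q(0)|>1$ and the logarithmic estimate $\ln|y_n|=\ln|y_0|+\sum_{k<n}\ln|q(u_k)|\to+\infty$ gives $|y_n|\to\infty$ (provided the orbit avoids the zero $u=-1$ of $q$), while if $|d|<2/3$ then $|p(0)|>1$ and symmetrically $|x_n|\to\infty$ (provided the orbit avoids the zero $u=4$ of $p$). On $\mathcal{B}_2$, on $\mathcal{O}_1$ and on the $2$-cycle I would again use reciprocity: one multiplier has modulus $>1$ as soon as $|d|\ne1/3$ (for $u_*=2$ and for $\{q_\pm\}$) or $|d|\ne1/2$ (for $u_*=1$), and blow-up of the corresponding coordinate follows exactly as in (i)(b).

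The main obstacle is the handful of \emph{borderline} configurations that a literal reading of (ii) must face. For the attracting fibre $u=2$ the reciprocal multipliers $|p(2)|=\frac{1}{3|d|}$ and $|q(2)|=3|d|$ are both equal to $1$ exactly when $|d|=1/3$; the same value makes $|p(q_-)p(q_+)|=\frac{1}{9|d|^2}$ and $|q(q_-)q(q_+)|=9|d|^2$ equal to $1$ on the $2$-cycle, while on the fibre $u=1$ the multipliers $\frac{1}{2|d|}$ and $2|d|$ both equal $1$ when $|d|=1/2$. In each of these cases $x_n$ and $y_n$ stay bounded—on the hyperbolic attracting fibre $u=2$ by Corollary \ref{corolaridiff}, and on $\mathcal{O}_1$ and on the $2$-cycle directly, the multipliers being eventually of modulus one. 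Moreover the countably many initial conditions whose $u$-orbit meets $\{4,-1\}$ (the zeros of $p$ and $q$) make one coordinate vanish and, for $|d|$ on the appropriate side, send the other to $0$, again giving a bounded orbit. Hence the delicate point is to confirm that away from this measure-zero exceptional set the governing multiplier is genuinely off the unit circle; establishing unboundedness for literally \emph{every} $x_0y_0\ne0$ forces one either to exclude, or to treat separately via Corollary \ref{corolaridiff}, these exceptional triples $(d,x_0,y_0)$.
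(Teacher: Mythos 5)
Your skeleton coincides with the paper's own proof: reduce to $u_{n+1}=\phi(u_n)$ via $u_n=x_ny_n$, obtain (i)(a) from Proposition~\ref{hom}, and prove (i)(b) by running through the partition of Lemma~\ref{l10}. For (i)(b) your write-up is in fact tighter than the paper's, which argues ``$y_{n+1}\simeq 3d\,y_n$'' on $\mathcal{B}_2$ and dismisses the other cases as similar: your use of Proposition~\ref{propoprob1attractor} to get $x_n\to0$ and then $|y_n|=|u_n|/|x_n|\to\infty$, and the bound $|x_n|+|y_n|\geq 2\sqrt{|u_n|}$ on $\mathcal{B}_\infty$, are clean. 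Note also that in case (i) the exceptional orbits you worry about cannot occur: if $u_n\in\{4,-1\}$ for some $n$, then $u_{n+1}=0$, so $u_0\in\mathcal{B}_0$ and we are in case (i)(a); hence your proof of (i) is complete with no caveats needed.

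The essential point is part (ii): the ``borderline configurations'' of your last paragraph are not a defect of your argument but genuine counterexamples to the statement, and they expose an error in the paper's own proof. The paper proves (ii) by checking the invariant sets $xy\in\{0,1,2\}$ and $(xy-q_-)(xy-q_+)=0$ and asserting that on them ``clearly the orbits go towards infinity''; this assertion fails exactly at the values you isolate. For $|d|=1/3$ (which lies in $\R^+\setminus[2/3,1]$) the restriction to $xy=2$ is $x_{n+1}=\pm x_n$, $y_{n+1}=\pm y_n$, so these orbits are bounded; moreover, as you observe, Corollary~\ref{corolaridiff} applied to each coordinate shows that \emph{every} orbit with $x_0y_0\in\mathcal{B}_2$ is then bounded, so (ii) fails on an open set of initial conditions. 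Likewise $|d|=1/2$ defeats the claim on $xy=1$, and $|d|=1/3$ defeats it on the $2$-cycle. The zeros of $p$ and $q$ give further counterexamples: for $|d|<2/3$ take $(x_0,y_0)=(1,4)$, so $x_1=0$ and then $y_{n+1}=d\,y_n\to0$; for $|d|>1$ take $(x_0,y_0)=(1,-1)$, so $y_1=0$ and then $x_{n+1}=\frac{2}{3d}\,x_n\to0$; in both cases the orbit converges to the origin although $x_0y_0\neq0$. So statement (ii) is correct only after excluding $|d|\in\{1/3,1/2\}$ and the countably many fibers whose $u$-orbit meets $\{4,-1\}$; with those exclusions, your argument (reciprocal multipliers off the unit circle, plus the logarithmic estimate on $\mathcal{B}_0$ and the AM--GM bound on $\mathcal{B}_\infty$) constitutes a complete proof, and it is more careful than the one the paper gives.
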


\begin{proof}
(i) By using Lemma~\ref{l10},  assertion $(a)$ is a consequence of
Proposition \ref{hom}.

To prove (b), assume that $x_0y_0\not\in\mathcal{B}_0.$ Then, again
by Lemma~\eqref{l10}, the sequence $\{u_n\}$, where $u_n=x_ny_n$,
has four possibilities
 when $n$ goes to infinity: either it tends to $2$, or  after some iterates it is constant equal to 1, or it takes the
 values
 $q_-,q_+$, or it tends to infinity. Using $y_{n+1}=dy_n\left(1+u_n\right)$, we see that in the first case, when
$n$ is big enough $y_{n+1}\simeq 3d\,y_n$. Since $2/3<|d|<1$, it
follows that $|y_n|$ tends to $\infty$ when $n$ goes to infinity.
The other  cases follow similarly.

The proof of (ii) is a consequence of Lemma~\ref{l10} and the
dynamics of~\eqref{propoexemple} on the invariant sets $xy=0, xy=1,$
$xy=2$ and $(xy-q_-)(xy-q_+)=0.$ For instance, on the third one,
$$
\left\{
 \begin{array}{rl}
x_{n+1}&=\displaystyle{\frac{1}{3d}}\,x_n,\\
y_{n+1}&=3d\,y_n,
  \end{array}
\right.
$$
and clearly the orbits go towards infinity on it.
\end{proof}

Notice that system \eqref{propoexemple} presents  interesting
bifurcation when $|d|\in\{2/3,1\}.$ In particular, as a consequence
of the shape of $\mathcal{B}_0$ explained above, when $2/3<|d|<1$,
the basin of attraction of $(0,0)$ of this system
 is formed by the union of  infinitely many
disjoint hyperbolic-shaped bands which accumulate to the hyperbolas
$xy=q_\pm$; see Figure 1.

\medskip

\begin{center}
\includegraphics[scale=0.30]{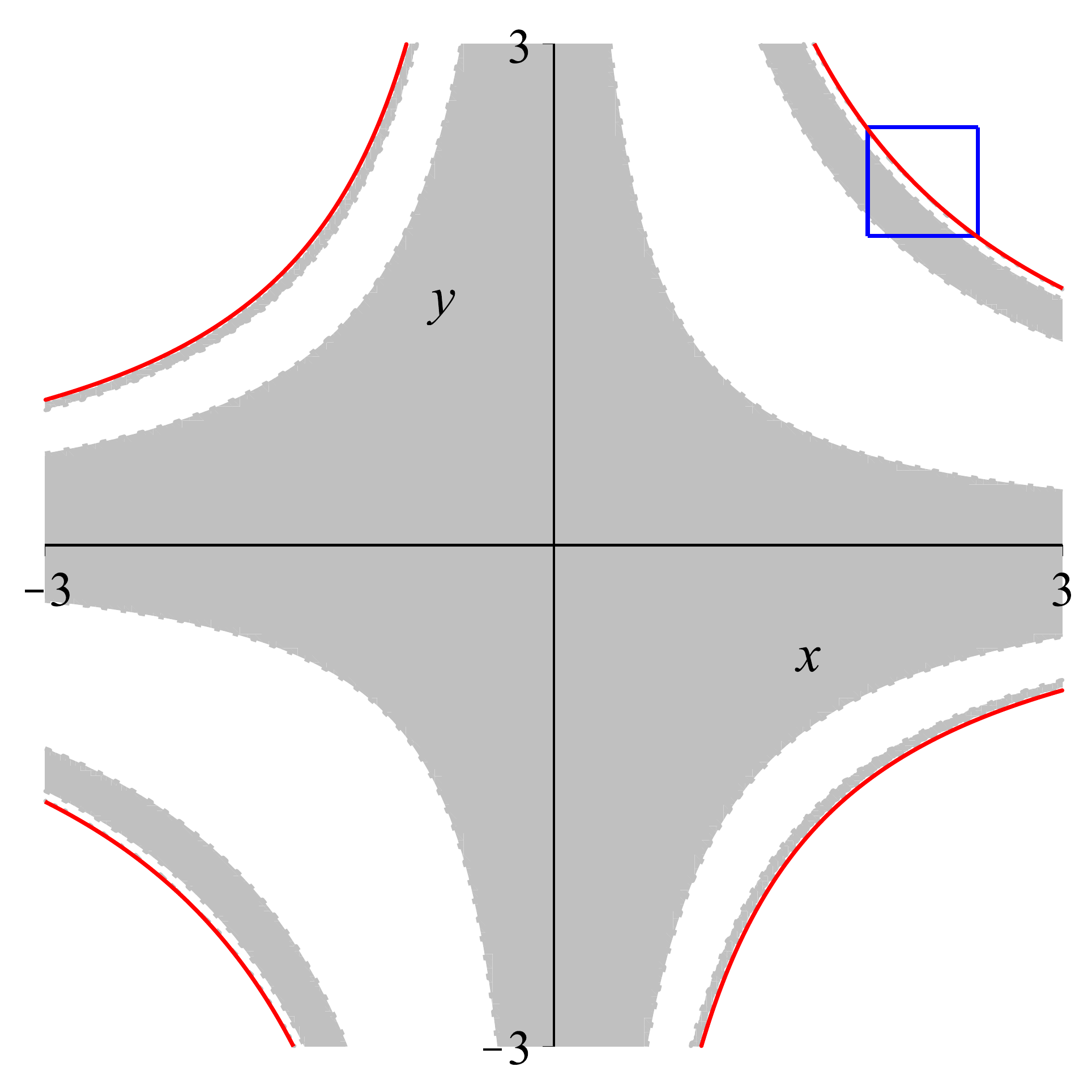}\hspace{1cm}
\includegraphics[scale=0.30]{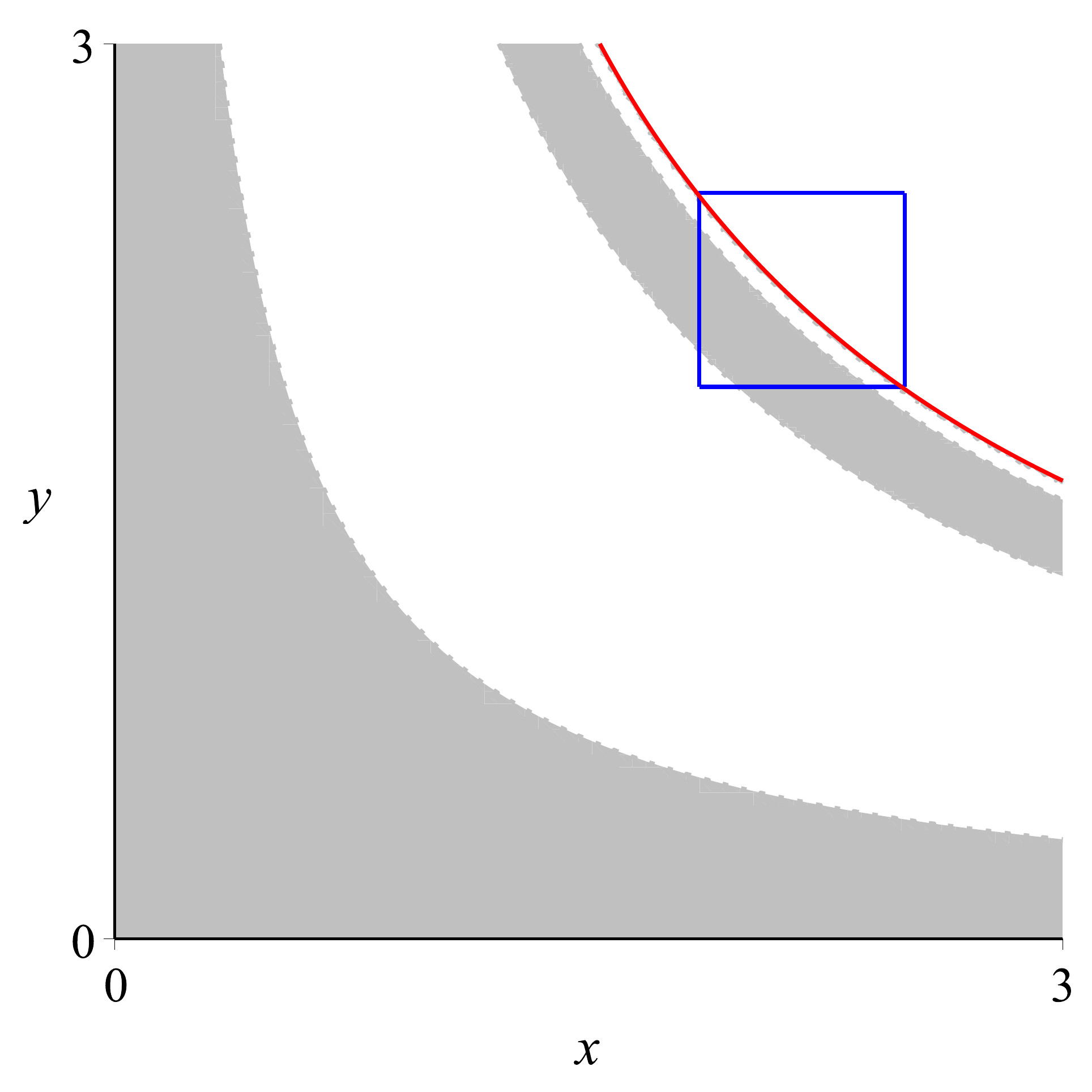}\\

\includegraphics[scale=0.30]{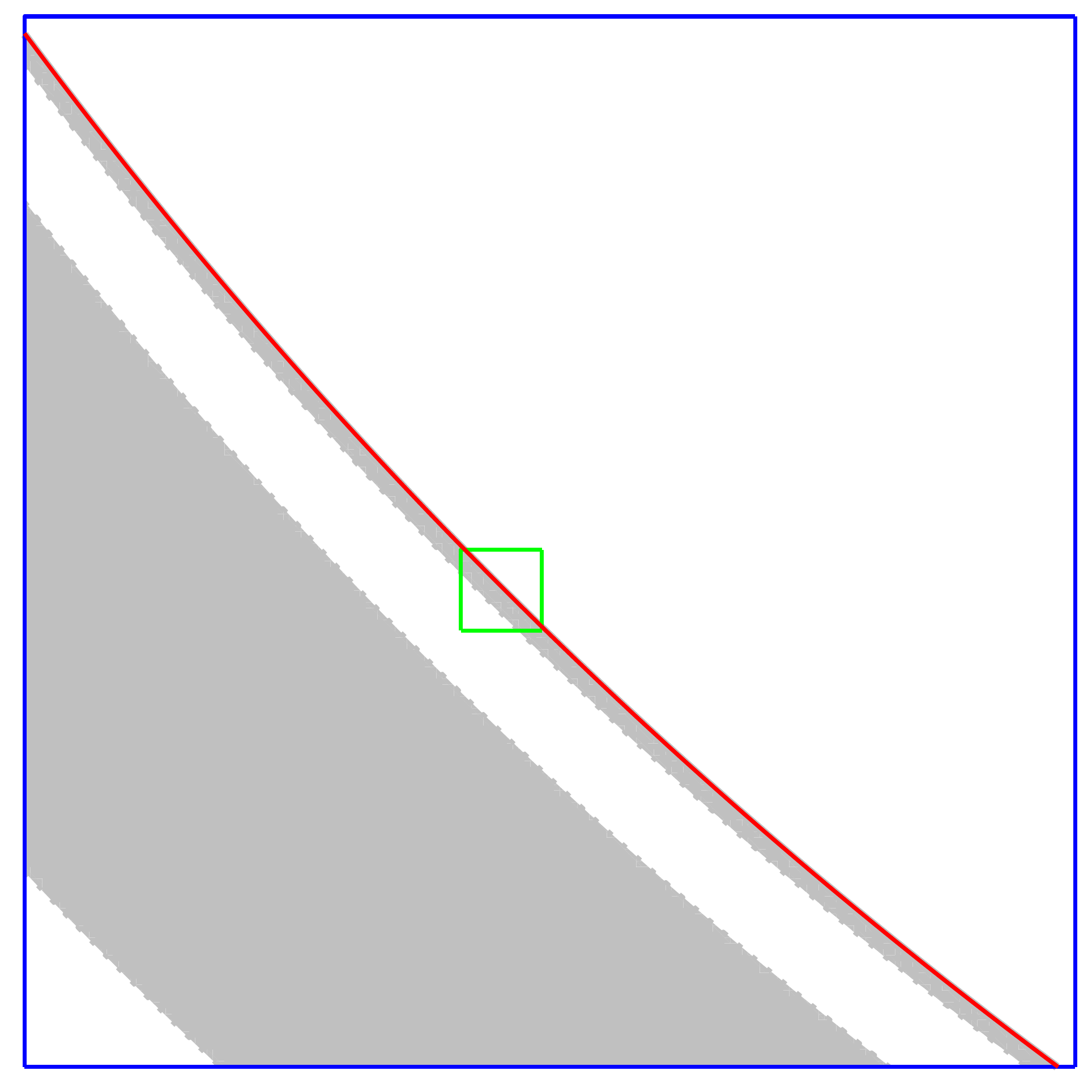}\hspace{1cm}
\includegraphics[scale=0.30]{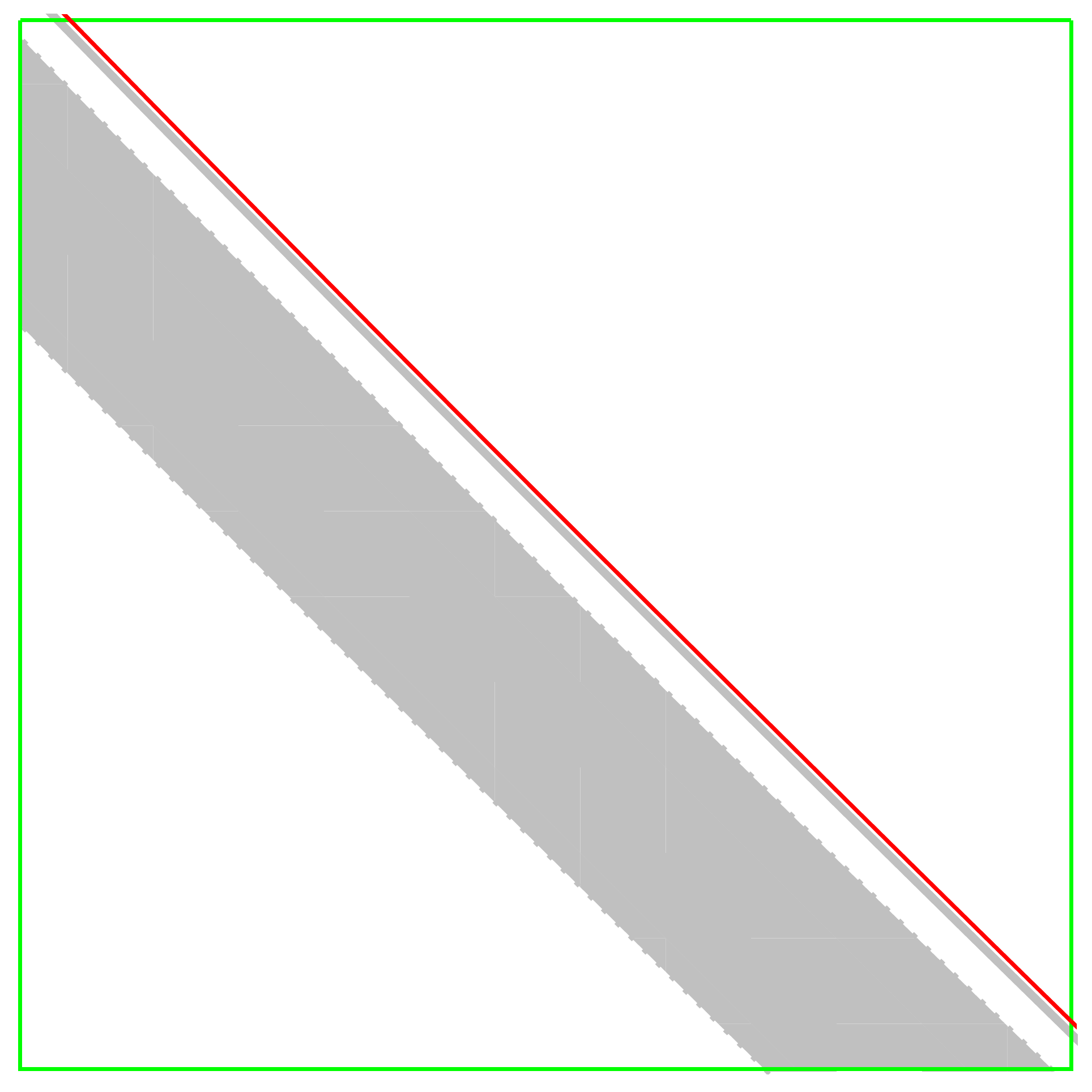}
\end{center}
\begin{center}
Figure 1: Details of the basin of attraction of $(0,0)$ of system
(\ref{propoexemple}) for $2/3<|d|<1,$\\  shaded in grey.
\end{center}

\medskip

\subsection{Second order multiplicative  difference
equations}\label{secondordermultiplicative}

We consider the next family of second order multiplicative
difference equations
\begin{equation}\label{g1}
x_{n+2}=x_ng(x_nx_{n+1}),
\end{equation}
where $g:\mathcal{U}\to\R$ is a $\mathcal{C}^1$ function defined in
an open set of $\mathcal{U}\subseteq\R$.  Multiplying both sides
of~(\ref{g1}) by $x_{n+1}$, and setting $u_n=x_n x_{n+1}$ we get
that Equation (\ref{g1}) can be written as
\begin{equation}\label{g3}
\left\{
  \begin{array}{rl}
x_{n+1}&=\frac{u_n}{x_n},\\
u_{n+1}&=u_n\,g(u_n),
  \end{array}
\right.
\end{equation}
which has the associated map
$$F(x,u)=\left(\frac{u}{x},u\,g(u)\right).$$
If we consider the map $F^2(x,u)$ we obtain
$$F^2(x,u)=\left(g(u)\,x,u\,g(u)\,g(u\,g(u))\right).$$
Hence, by calling $z_n=x_{2n}$ and $v_n=u_{2n}$ (respectively
$z_n=x_{2n+1}$ and $v_n=u_{2n+1}$) we get the system
\begin{equation}\label{g4}
\left\{
  \begin{array}{rl}
z_{n+1}&=g(v_n)\,z_n,\\
v_{n+1}&=v_n\,g(v_n)\,g(v_n\,g(v_n)),
  \end{array}
\right.
\end{equation}
which is of the form (\ref{problema1}). Hence we can apply
Proposition \ref{propoprob1attractor} and Corollary
\ref{corolaridiff} to system (\ref{g4}) and study the behavior of
$(x_{2n},u_{2n})$ (respectively $(x_{2n+1},u_{2n+1})$).  Notice that
when we consider the initial conditions $(x_0,u_0)$ (resp.
$(x_1,u_1)$ and we apply system (\ref{g4}) iteratively we get
$(x_{2n},u_{2n})$ (resp. $(x_{2n+1},u_{2n+1})$).

\medskip

\noindent \textsl{Example G.} Among the recurrences of type
(\ref{g1}), we are going to consider the one given by
\begin{equation}\label{Bliz}
    x_{n+2}=\frac{x_n}{a+bx_nx_{n+1}},\,a,b\in\R.
\end{equation}
The global behavior of Equation (\ref{Bliz}) is now completely
understood after the work done by Bajo and Liz \cite{BL}. Their
approach is based on the computation of the explicit solutions of
(\ref{Bliz}). Previously, particular cases of this equation have
been studied by a large numbers of experts, see
\cite{A,ACL1,ACL2,ASM,C} and \cite{Stev0}, the reader is also
referred to \cite{BL} for a summary of these previous references
concerning this equation.

Equation (\ref{Bliz}) has the form (\ref{g1}), where
$g(u)=1/(a+bu)$. Furthermore, it corresponds with the systems of
type (\ref{g3}):
$$
\left\{
  \begin{array}{rl}
x_{n+1}x_n&=u_n,\\
u_{n+1}&=\frac{u_n}{a+b u_{n}}.
  \end{array}
\right.
$$
This system has two invariant fibers given by $\{u=0\}$ and
$\{u=(1-a)/b\}$ whose stability is determined by the parameter $a$,
since setting $\phi(u)=ug(u)=u/(a+bu)$ we have $\phi'(0)=1/a$ and
$\phi'((1-a)/b)=a$. The global dynamics of equation (\ref{Bliz}) in
its \emph{good set} (that is, the set of all initial conditions for
which  the dynamical system is well defined, \cite{Ladas})
$\mathcal{G}=\R\setminus\{\cup_{n\geq
0}\phi^{-n}(-a/b)\}$\footnote{In another framework we could consider
this recurrence defined in $\bar{\R}=[-\infty,\infty]$, and then the
good set would be $\bar{\R}$.}, is determined by the graph of
$\phi$. First it is necessary to know the dynamics of the recurrence
$u_{n+1}=\phi(u_n)$ which, since $\phi$ is a M\"obius
transformation, is well known. For instance, using \cite[Corollary
7]{CGM06} we get:
\begin{lem}\label{moeb}
Consider the real one-dimensional recurrence given by
\begin{equation}\label{moeb2}
    u_{n+1}=\frac{u_n}{a+b u_{n}}.
\end{equation}
Then the points $u=0$ and $u=(1-a)/b$ are fixed. Furthermore setting
$\mathcal{G}=\R\setminus\{\cup_{n\geq 0}\phi^{-n}(-a/b)\}$ we have:
\begin{enumerate}
  \item[(a)] If $|a|>1$, then for any initial condition $(1-a)/b\neq u_0\in\mathcal{G}$, $\lim\limits_{n\to\infty}u_n=0$.
  \item[(b)] If $|a|<1$, then for any initial condition $0\neq u_0\in\mathcal{G}$, $\lim\limits_{n\to\infty}u_n=(1-a)/b$.
  \item[(c)] If $a=-1$, then for any initial condition in $\mathcal{G}$, the sequence $\{u_n\}$ is $2$-periodic.
  \item[(d)] If $a=1$, then for any initial condition in $\mathcal{G}$,  $\lim\limits_{n\to\infty}u_n=0$.
\end{enumerate}
\end{lem}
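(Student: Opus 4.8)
The plan is to recognize that the recurrence \eqref{moeb2} is generated by the M\"obius transformation $\phi(u)=u/(a+bu)$, which corresponds to the matrix
$$
M=\begin{pmatrix} 1 & 0 \\ b & a \end{pmatrix},
$$
so that the iterate $\phi^n$ is represented (up to scalar) by $M^n$. The whole dynamics is therefore governed by the eigenvalues of $M$, namely $\{1,a\}$, and the classification into cases (a)--(d) is precisely the classification according to whether $|a|>1$, $|a|<1$, $a=-1$ or $a=1$. I would invoke the cited \cite[Corollary 7]{CGM06}, which presumably gives exactly the asymptotic behaviour of the orbits of a real M\"obius recurrence in terms of the trace/eigenvalue configuration of its associated matrix; the fixed points of $\phi$ are the points corresponding to the eigendirections of $M$.

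First I would verify the two fixed points directly: solving $u=u/(a+bu)$ gives either $u=0$ or $a+bu=1$, i.e. $u=(1-a)/b$, establishing the first sentence of the statement. The derivative computation $\phi'(u)=a/(a+bu)^2$ yields $\phi'(0)=1/a$ and $\phi'((1-a)/b)=a$, so the two fixed points exchange the roles of attractor and repeller as $|a|$ crosses $1$; this already signals cases (a) and (b). For case (a), $|a|>1$: the eigenvalue $a$ dominates $1$, the fixed point $u=0$ (eigendirection for the eigenvalue $1$, which is now the smaller one in the relevant projective sense) is globally attracting on the good set, and every orbit except the one starting at the repelling fixed point $(1-a)/b$ converges to $0$. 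For case (b), $|a|<1$, the roles swap and orbits converge to $(1-a)/b$. In each case the restriction to the good set $\mathcal{G}$ is exactly what removes the (backward) orbit landing on the pole $u=-a/b$ where $\phi$ is undefined.

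For the two boundary cases I would argue via the eigenvalue ratio being a root of unity. When $a=-1$ the eigenvalues are $\{1,-1\}$, whose ratio is $-1$, so $M^2$ is a scalar multiple of the identity and hence $\phi\circ\phi=\mathrm{Id}$ on $\mathcal{G}$; this gives case (c), that every orbit is $2$-periodic. When $a=1$ the matrix $M$ is a non-diagonalizable parabolic matrix (a single Jordan block with eigenvalue $1$), so $\phi$ is a parabolic M\"obius map with the unique fixed point $u=0$, and every orbit on $\mathcal{G}$ converges to $0$, which is case (d); here the two fixed points $0$ and $(1-a)/b=0$ have merged. The main obstacle, such as it is, is purely bookkeeping: matching the hypotheses of \cite[Corollary 7]{CGM06} to the present normalization and correctly tracking which eigendirection corresponds to which fixed point so that the attracting/repelling labels come out right, together with the routine check that the excluded set of poles is exactly $\cup_{n\geq 0}\phi^{-n}(-a/b)$ so that $\phi$ (and all its iterates) is well defined on $\mathcal{G}$.
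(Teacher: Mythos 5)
Your proposal follows essentially the same route as the paper: the paper offers no independent proof of Lemma~\ref{moeb}, it simply notes that $\phi$ is a M\"obius transformation whose dynamics is well known and invokes \cite[Corollary 7]{CGM06}, which is precisely your reduction (your matrix $M$, its eigenvalues $\{1,a\}$, the involution $M^2=I$ for $a=-1$, and the parabolic Jordan block for $a=1$ are the standard argument behind that corollary). One small bookkeeping correction: your parenthetical labelling is swapped --- $u=0$ is the eigendirection of the eigenvalue $a$ and $u=(1-a)/b$ that of the eigenvalue $1$, so the attracting fixed point is the one carrying the \emph{dominant} eigenvalue --- but since you anchor the attractor/repeller identification with the derivatives $\phi'(0)=1/a$ and $\phi'((1-a)/b)=a$, all four conclusions come out correctly.
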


Hence as consequence of the above lemma, Proposition
\ref{propoprob1attractor} and Corollary \ref{corolaridiff}, as well
as some other ad hoc arguments, we give an alternative proof of the
result in \cite{BL}:

\begin{teo}\label{propoliz}
Consider Equation (\ref{Bliz}). Then the following statemens hold:
\begin{enumerate}
\item[(a)]  If $x_0=x_1=0$, then the sequence $x_n=0$ for all $n\in\N$.
  \item[(b)] If $|a|>1$, then for any initial condition $x_0,x_1$ such that $(1-a)/b\neq x_0x_1\in\mathcal{G}$,
  $\lim\limits_{n\to\infty}x_n=0$. If $x_0x_1=(1-a)/b$ then $\{x_n\}$ is
  $2$-periodic.
  \item[(c)] If $|a|<1$, then for any initial condition $x_0,x_1$ such that
  $x_0x_1\in\mathcal{G}$ we have: if $x_0x_1\neq 0$ and $x_0x_1\neq (1-a)/b$
   then $\{x_n\}$ tends to a   $2$-periodic orbit
   $\{\ell_0(x_0,x_1),\ell_1(x_1,x_2)\}$ such that
   $\ell_0(x_0,x_1)\ell_1(x_1,x_2)=u_*$; if $x_0x_1=(1-a)/b$ then $\{x_n\}$ is
  $2$-periodic; and if $x_0x_1=0$ then $\lim\limits_{n\to\infty}|x_n|=\infty$.
 \item[(d)] If $a=-1$, then for any initial condition $x_0,x_1$ such that
  $x_0x_1\in\mathcal{G}$ we have: if $x_0x_1\neq 0$ and $x_0x_1\neq 2/b$
   then the solution $\{x_n\}$ is unbounded; if $x_0x_1\neq 2/b$ then $\{x_n\}$ is
  $2$-periodic; and if $x_0x_1=0$ then $\{x_n\}$ is
  $4$-periodic.
  \item[(e)]  If $a=1$, then for any initial condition $x_0,x_1$ such that $0\neq x_0x_1\in\mathcal{G}$,
  $\lim\limits_{n\to\infty}x_n=0$. If $x_0x_1=0$ then $\{x_n\}$ is
  $2$-periodic.
\end{enumerate}
\end{teo}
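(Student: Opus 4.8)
The plan is to reduce the statement about Equation~(\ref{Bliz}) to the one-dimensional dynamics described in Lemma~\ref{moeb} combined with the convergence results for the two-step system~(\ref{g4}). The basic bookkeeping is that $u_n=x_nx_{n+1}$ evolves by the M\"obius recurrence~(\ref{moeb2}), so the first task in every case is to read off from Lemma~\ref{moeb} the fate of $\{u_n\}$, and then to transfer this information to $\{x_n\}$ via $x_{n+1}=u_n/x_n$. The even and odd subsequences $\{x_{2n}\}$ and $\{x_{2n+1}\}$ satisfy the affine triangular system~(\ref{g4}), whose fiber map is $\phi(u)=u/(a+bu)$ with $g(u)=1/(a+bu)$; I would check in each case which hypothesis ($|f_1(u_*)|<1$, $=1$, or $=-1$) holds for~(\ref{g4}) at the relevant limit fiber $u_*$, and invoke Proposition~\ref{propoprob1attractor} or Corollary~\ref{corolaridiff} accordingly.

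Concretely, I would organize the proof case by case following the structure of Lemma~\ref{moeb}. Statement~(a) is trivial since $x_0=x_1=0$ forces $x_n\equiv 0$ from the recurrence. For~(b), when $|a|>1$, Lemma~\ref{moeb}(a) gives $u_n\to 0$, so the relevant limit fiber for~(\ref{g4}) is $u_*=0$, where $f_1(u_*)=g(0)=1/a$ satisfies $|1/a|<1$; thus Proposition~\ref{propoprob1attractor} applied to~(\ref{g4}) yields $x_{2n}\to 0$ and $x_{2n+1}\to 0$, i.e. $x_n\to 0$, while the exceptional fiber $x_0x_1=(1-a)/b$ produces a $2$-periodic $\{u_n\}$ and hence a $2$-periodic $\{x_n\}$. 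For~(c), when $|a|<1$, the attracting fiber is $u_*=(1-a)/b$ where $\phi'(u_*)=a$, so for the two-step map $F_1(u_*)=g(u_*)^{?}$ must be computed: here $f_1=g$, $g(u_*)=1/(a+bu_*)=1/(a+(1-a))=1$ for the even/odd subsystem after one checks that the doubled fiber map has derivative $a^2<1$, placing us in the case $|f_1(u_*)|=1$ of Corollary~\ref{corolaridiff}(a) applied to~(\ref{g4}); this yields convergence of $\{x_{2n}\}$ and $\{x_{2n+1}\}$ to limits $\ell_0,\ell_1$ with product $u_*$, the $2$-periodic orbit claimed. The degenerate directions $x_0x_1=0$ (but not both zero) give $u_n\equiv 0$ yet $x_n\to\infty$, which I would verify directly from $x_{n+1}=u_n/x_n=0$ forcing alternation and blow-up, or rather by tracking that one of the two subsequences is forced to diverge.

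The genuinely delicate cases are~(d) and~(e), the boundary values $a=\pm 1$ where the fiber dynamics is non-hyperbolic. For $a=1$, Lemma~\ref{moeb}(d) gives $u_n\to 0$ with $f_1(0)=g(0)=1$, so I would appeal to Corollary~\ref{corolaridiff}(a) or Theorem~\ref{propoprob1identitatv2}(a) for the doubled system, after verifying the fast-enough-convergence property---this is where $\phi$ has a parabolic fixed point at $0$ and the convergence $u_n\to 0$ is only algebraic, so one must check that $g'(0)$ and the rate combine to make $S_V,S_W$ finite in the sense of Definition~\ref{fecld}. For $a=-1$, we land in Lemma~\ref{moeb}(c) with $\{u_n\}$ genuinely $2$-periodic, so $\{u_n\}$ does not converge to a single fiber and the clean triangular machinery does not directly apply; here I expect to argue by hand, tracking the four-fold structure and the sign $f_1(u_*)=-1$ to extract the unbounded solutions and the $4$-periodicity on $x_0x_1=0$. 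The main obstacle will be precisely these non-hyperbolic boundary cases $a=\pm1$: verifying the hypotheses of Theorem~\ref{propoprob1identitatv2} (or supplying the promised \emph{ad hoc} arguments) when the fiber map loses hyperbolicity, and correctly matching the even/odd subsequence limits through the relation $\ell_o=f_0(u_*)-\ell_e$ of Theorem~\ref{propoprob1identitatv2}(b), which for the multiplicative reduction becomes the constraint $\ell_0\ell_1=u_*$ on the limiting $2$-periodic orbit.
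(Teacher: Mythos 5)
Your treatment of cases (a)--(c) is essentially the paper's own argument: reduce to Lemma \ref{moeb}, apply Proposition \ref{propoprob1attractor} to the doubled system (\ref{g4}) when $|a|>1$, and apply Corollary \ref{corolaridiff} when $|a|<1$, using that $g(u_*)=1$ at $u_*=(1-a)/b$ and that the doubled fiber map is a hyperbolic attractor there. One small correction: the constraint $\ell_0(x_0,x_1)\,\ell_1(x_1,x_2)=u_*$ does not come from the relation $\ell_o=f_0(u_*)-\ell_e$ of Theorem \ref{propoprob1identitatv2}(b) (that identity belongs to the affine two-step system, not to the multiplicative reduction); it follows simply from passing to the limit in $u_n=x_nx_{n+1}\to u_*$.

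The genuine gap is case (e), $a=1$. Your primary plan there --- verify the fast-enough-convergence property of Definition \ref{fecld} and invoke Theorem \ref{propoprob1identitatv2}(a) or Corollary \ref{corolaridiff}(a) for the doubled system --- cannot work, because that property is \emph{false} in this case. For the doubled system (\ref{nousistema30bis}) the fiber recurrence solves explicitly as $v_n=v_0/(1+2bv_0n)$, so $|v_n|$ decays like $1/(2|b|n)$, while the natural bound is $V(\nu)\sim|b|\nu$; hence $S_V=\sum_j V(p_j)\sim\sum_j 1/(2j)=\infty$ and hypothesis $H_4$ fails for every admissible choice of $p_n$. The claimed conclusion $x_n\to 0$, with a limit independent of the initial condition, is precisely the signature of this failure: the infinite product $\prod_j (1+bv_j)^{-1}$ diverges to $0$ because $\sum_j v_j$ diverges, exactly as in Example C (with collapse instead of blow-up). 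The paper therefore proves (e) by an ad hoc computation: solve the M\"obius fiber map explicitly, take logarithms, and show $\ln|z_{n+1}|\sim-\sum_j\left|bv_0/(2bv_0j+1)\right|\to-\infty$, whence $z_n\to0$. Your hedge (``or supplying the promised ad hoc arguments'') names the fallback but supplies none of this, so case (e) is unproved in your proposal. Case (d) has the same character though it is easier: since $\{u_n\}$ is genuinely $2$-periodic there is no limit fiber at all and none of the main theorems apply; the paper simply computes $x_{2n}=x_0/(bx_0x_1-1)^n$ and $x_{2n+1}=x_1(bx_0x_1-1)^n$, from which the unboundedness, the $2$-periodicity when $x_0x_1=2/b$, and the $4$-periodicity when $x_0x_1=0$ all follow at once; your plan to ``track the four-fold structure'' points in this direction but contains no actual argument.
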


\begin{proof} Statement (a) is trivial.

 In order to prove (b) we consider $|a|>1.$ Then, by Lemma \ref{moeb} (a) $u=0$
is an attractor of the recurrence (\ref{moeb2}) in
$\mathcal{G}\setminus\{u=(1-a)/b\}$. We now note that for equation
(\ref{Bliz}), system (\ref{g4}) writes as
\begin{equation}\label{dup}
\left\{
  \begin{array}{rl}
z_{n+1}&=\frac{1}{a+bv_n}z_n,\\
v_{n+1}&=\frac{v_n}{a^2+b(1+a)v_{n}}.
  \end{array}
\right.
\end{equation}
Applying Proposition \ref{propoprob1attractor} to system (\ref{dup})
we deduce that $z_n\to 0.$ It implies that $x_{2n}\to 0$ and
$x_{2n+1}\to 0$ too. Hence, for each initial condition such that
$(1-a)/b\neq x_0x_1\in\mathcal{G}$ $\lim\limits_{n\to\infty}x_n=0$.
On the other hand, substituting $x_0x_1=(1-a)/b$ in Equation
(\ref{Bliz}) we get $x_2=x_0$, obtaining a $2$-periodic orbit.

(c) If $|a|<1$, then by Lemma \ref{moeb} (b), for all $u_0\ne
u_*:=(1-a)/b$ the sequence $u_n\to u_*.$  Since $f_1(u_*)=1$ and
$u_*$ is a hyperbolic attractor of (\ref{moeb2}),  we can use
Corollary \ref{corolaridiff} to assert that the sequence $v_n$
converges to a point which depend on the initial condition. Then, if
we take the initial condition $(z_0,v_0)=(x_0,u_0)$  (resp.
$(z_0,v_0)=(x_1,u_1)$) we have that
$\lim_{n\to\infty}u_{2n}=\lim_{n\to\infty}v_n=\overline{l_0}(x_0,u_0):=l_0(x_0,x_1)$
(resp.
$\lim_{n\to\infty}u_{2n+1}=\lim_{n\to\infty}v_n=\overline{l_1}(x_1,u_1):=l_1(x_1,x_2)$).
Then, since $u_n=x_nx_{n+1},$ the condition
$l_0(x_0,x_1)\,l_1(x_1,x_2)=u_*$ must be satisfied. The other
assertions of statement $(c)$ are easily deduced from the equation
(\ref{Bliz}).

(d) If $a=-1$ and $x_0x_1\ne 0\,,\,x_0x_1\ne 2/b$ then
$$u_{n+2}=u_n,\quad
x_{2n}=\frac{x_0}{(bx_0x_1-1)^n}\quad\text{and} \quad
x_{2n+1}=x_1(bx_0x_1-1)^n.$$ Hence $x_n$ is unbounded. If
$x_0x_1=2/b$, then $x_{2n}=x_0\,,\,x_{2n+1}=x_1$ and $\{x_n\}$ is
2-periodic. If $x_0=0$, then  $x_{2n}=0\,,\,x_{2n+1}=x_1\,(-1)^n$
and $\{x_n\}$ is 4-periodic. If $x_1=0$, then
$x_{2n}=(-1)^n\,x_0\,,\,x_{2n+1}=0$ and $\{x_n\}$ is 4-periodic.

(e) Similarly, if $a=1$ and $x_0x_1= 0$ then $\{x_n\}$ is
$2$-periodic. Consider now $x_0x_1\neq 0.$ Equation (\ref{g4}) is

\begin{equation}\label{nousistema30bis}
\left\{
  \begin{array}{rl}
z_{n+1}&=\left(\frac{1}{1+bv_n}\right)z_{n},\\
v_{n+1}&=\frac{v_{n}}{1+2bv_{n}}.
  \end{array}
\right.
\end{equation}
 An straightforward computation shows
that the second component of system (\ref{nousistema30bis}) has the
following explicit solution:
\begin{equation}\label{vn}
v_{n}=\frac{v_{0}}{1+2\,b\,v_{0}\,n}.
\end{equation}
Hence the  first equation of  system (\ref{nousistema30bis}) is
$$
z_{n+1}=\left( \frac{1}{1+\frac{bv_{0}}{2\,b\,v_{0}\,n+1}}
\right)z_{n},
$$
and we can construct the explicit solution
$$
z_{n+1}=\left(\prod\limits_{j=0}^n
\frac{1}{1+\frac{bv_{0}}{2\,b\,v_{0}\,j+1}}\right)\,z_{0}.
$$

So if $n>n_0$ for a suitable $n_0$, we can take logarithms in the
above equations, obtaining:
$$
\ln|z_{n+1}|=- \sum\limits_{j=0}^n
\ln\left|1+\frac{b\,v_{0}}{2\,b\,v_{0}\,j+1}\right|\,+\ln|z_{0}|\sim
- \sum\limits_{j=0}^n
\left|\frac{bv_{0}}{2\,b\,v_{0}\,j+1}\right|.$$ Observe that, for
any value of $b$ and $v_{0}$ we have $\lim\limits_{n\to\infty}
-\sum\limits_{j=0}^n
\left|\frac{bv_{0}}{2\,b\,v_{0}\,j+1}\right|=-\infty,$ hence
$\lim\limits_{n\to \infty} z_{n}=0$, and the result follows.
\end{proof}

\medskip

The next examples show that the approach in this section can be
applied to other families of difference equations not in the class
(\ref{g1}). We will not develop the analysis in this paper, our main
purpose is only to illustrate a range of applications of the results
in Section \ref{main}.

\medskip

\noindent \textsl{Example H: A third order equation.} The exact
solutions of the third order equation
$$
    x_{n+3}=\frac{x_{n+2}x_n}{x_{n+1}(a+bx_{n+2}x_n)},
$$
have been obtained in \cite{Ibr}. A complete analysis of the
dynamics associated to this equation can be done using a similar
approach as above. Setting $\phi(u)=u/(a+bu)$, we have that the
subsequences $\{x_{2k}\}$ and $\{x_{2k+1}\}$ can be studied using
the system
$$
\left\{
  \begin{array}{rl}
y_{n+1}y_{n}&=v_{n},\\
v_{n+1}&=\phi^2(v_{n})=\frac{v_{n}}{b(a+1)v_{n}+a^2},
  \end{array}
\right.
$$ where $y_{n}=x_{2n+i}$, and $v_{n}=u_{2n+i}$ for $i=0,1$.
Hence, as in the case of Example G, the sequences $\{x_{2k+i}\}$ can
be straightforwardly characterized using the behavior of the real
M\"oebius recurrence $v_{n+1}=v_{n}/(b(a+1)v_{n}+a^2)$.

\medskip

\noindent \textsl{Example I.} Consider the equation
$$
    x_{n+2}=\frac{x_n^\gamma g(x_{n+1}x_n^\gamma)}{x_{n+1}^{\gamma-1}}.
$$
with $x_n\in\R^+$, $\gamma\in\R$ and being $g$ a continuous positive
function. By multiplying both sides of the equation by $x_{n+1}$;
setting $u_n=x_{n+1}x_n^\gamma$, and taking $y_n=\ln x_n$ we get
 the system of type
(\ref{problema1})
$$
\left\{
  \begin{array}{rl}
y_{n+1}&=\ln(u_n)-\gamma y_n,\\
u_{n+1}&=u_ng(u_n),
  \end{array}
\right.
$$
that can be studied using the results in Section \ref{main}, if
$|\gamma|\leq 1$ and there is a limit fiber $\{u=u_*\neq 0\}$ with
the fast enough convergence property.

\subsection{Higher order multiplicative   difference equations}\label{higher}

The above approach to study Equation (\ref{g1}) can be applied to
higher order multiplicative-type of difference equations. Consider
the order $k$ difference equation:
\begin{equation}\label{gk}
    x_{n+k}=x_n\,g(x_n x_{n+1}\ldots x_{n+k-1}).
\end{equation}
Some straightforward computations using the associated map
$$
    F(x_0,\ldots,x_{k-1})=(x_1,\ldots,x_{k-1},x_0g(x_0\cdots
    x_{k-1})),
$$
show that the sets $x_0x_1\cdots x_{k-1}=0$ and $x_0x_1\cdots
x_{k-1}=u_*$ for  $u^*\neq 0$ such that $g(u^*)=1$ (if it exists)
are invariant, and also lead to the following result:
\begin{lem}\label{lemafibresinvariantsk}
Consider the equation (\ref{gk}), being $g$ a continuous function
defined in an open set $\mathcal{U}\subseteq\R$ such that
$0\in\mathcal{U}$. Given some initial conditions $x_0,\ldots,
x_{k-1}$ we have:
\begin{enumerate}
  \item[(i)]  If $x_i=0$ for all $i=0,\ldots, k-1$ then $x_n=0$ for all $n\in\N$.
  \item[(ii)]  Suppose that $x_0x_1\cdots x_{k-1}=0$ and $x_0^2+\cdots+x_{k-1}^2\neq
  0$. If additionally $g(0)=-1$ then $\{x_n\}$ is $2k$-periodic; if $g(0)=1$ then $\{x_n\}$ is
  $k$-periodic; if $|g(0)|<1$ then $\lim\limits_{n\to\infty} x_n=0$;
  and   if  $|g(0)|>1$ then $\lim\limits_{n\to\infty} |x_n|=\infty$.
  \item[(iii)] If there exists $u^*\neq 0$ in $\mathcal{U}$ such that $g(u^*)=1$,  and $x_0x_1\cdots x_{k-1}=u^*$ then $\{x_n\}$ is
  $k$-periodic (non-minimal).
\end{enumerate}
\end{lem}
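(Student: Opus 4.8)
The plan is to prove the three statements directly by tracking how the product $P_n := x_n x_{n+1} \cdots x_{n+k-1}$ evolves under the recurrence, since the dynamics of this product governs everything. First I would observe that multiplying the defining equation $x_{n+k} = x_n\, g(x_n \cdots x_{n+k-1})$ by $x_{n+1} \cdots x_{n+k-1}$ telescopes nicely: the new product is $P_{n+1} = x_{n+1} \cdots x_{n+k} = (x_{n+1} \cdots x_{n+k-1}) \cdot x_n\, g(P_n) = P_n\, g(P_n)$. Thus $\{P_n\}$ satisfies the scalar recurrence $P_{n+1} = P_n\, g(P_n)$, which is exactly the type $u_{n+1} = \phi(u_n)$ with $\phi(u) = u\, g(u)$. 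This immediately gives invariance of the fibers $\{P_n = 0\}$ (since $\phi(0)=0$) and $\{P_n = u^*\}$ whenever $g(u^*)=1$ (since then $\phi(u^*)=u^*$), establishing the invariant-set claims asserted just before the lemma.

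Statement (i) is trivial: if all $x_i = 0$, then $x_k = x_0\, g(0) = 0$, and by induction every subsequent term vanishes. For statement (iii), if $P_0 = u^*$ with $g(u^*)=1$, then $\phi(u^*)=u^*$ forces $P_n = u^*$ for all $n$, and reading the recurrence with $g(P_n) = g(u^*) = 1$ gives $x_{n+k} = x_n$ for every $n$; hence $\{x_n\}$ is $k$-periodic. The non-minimality remark is just because the period could divide $k$.

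The substantive content is in statement (ii), where $P_0 = 0$ but not all $x_i$ vanish. Here I would note that $P_0 = 0$ means at least one factor $x_j = 0$; since $\phi(0)=0$, the product stays zero forever, and the recurrence collapses to $x_{n+k} = x_n\, g(0)$. This is a linear recurrence on each residue class modulo $k$: the subsequence $\{x_{i}, x_{i+k}, x_{i+2k}, \dots\}$ satisfies $x_{i+(m+1)k} = g(0)\, x_{i+mk}$, so $x_{i+mk} = g(0)^m\, x_i$. From this explicit form the four sub-cases follow by inspecting the multiplier $g(0)$: if $|g(0)|<1$ then each such geometric subsequence tends to $0$, so $x_n \to 0$; if $|g(0)|>1$ then any nonzero residue class diverges, and since not all $x_i$ are zero at least one class is nonzero, giving $|x_n| \to \infty$; if $g(0)=1$ then $x_{i+mk} = x_i$, so $\{x_n\}$ is $k$-periodic; and if $g(0)=-1$ then $x_{i+mk} = (-1)^m x_i$, which has period $2$ in $m$, yielding overall period $2k$.

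The main obstacle, and the only genuinely delicate point, is bookkeeping the interaction between the different residue classes modulo $k$ when $P_0 = 0$. One must be careful that although the \emph{product} is zero, the individual factors need not all be zero, so the behaviors of the $k$ subsequences are governed independently by the same multiplier $g(0)$ but with possibly different nonzero initial values. The claims about period $k$ and $2k$ are statements about the full sequence $\{x_n\}$, so I would verify that stitching together the $k$ residue-class subsequences produces exactly the stated global period — in particular checking that the $g(0)=-1$ case genuinely gives minimal period dividing $2k$ rather than collapsing, which it does because adjacent residue classes are phase-shifted. Everything else is routine geometric-series analysis once the reduction $P_{n+1} = P_n\, g(P_n)$ and the collapse $x_{n+k} = g(0)\, x_n$ on the zero fiber are in hand.
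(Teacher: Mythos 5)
Your reduction is precisely the computation the paper has in mind: the paper offers no written proof of Lemma~\ref{lemafibresinvariantsk} beyond the remark that ``straightforward computations'' with the associated map yield it, and your identity $P_{n+1}=P_n\,g(P_n)$ for the product $P_n=x_n x_{n+1}\cdots x_{n+k-1}$, together with the collapse $x_{n+k}=g(0)\,x_n$ on the zero fiber, is exactly the change of variables the paper itself uses later in the proof of Proposition~\ref{corolgk0}. Parts (i) and (iii), and the sub-cases $g(0)=\pm 1$ and $|g(0)|<1$ of part (ii), are handled correctly by your argument.

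However, your treatment of the sub-case $|g(0)|>1$ contains a genuine non sequitur. From ``at least one residue class is nonzero and diverges'' you conclude $\lim_{n\to\infty}|x_n|=\infty$. This does not follow, and in fact it is false under the hypotheses of (ii): the condition $x_0x_1\cdots x_{k-1}=0$ forces some $x_j=0$, and your own formula $x_{j+mk}=g(0)^m x_j$ shows that this residue class is identically zero. Hence $\liminf_{n\to\infty}|x_n|=0$, so $|x_n|$ cannot tend to infinity; the most that follows from your computation (and the most that is true) is that $\{x_n\}$ is unbounded, i.e.\ $\limsup_{n\to\infty}|x_n|=\infty$. Concretely, for $k=2$, $x_0=0$, $x_1=1$ and $g\equiv 2$ near $0$, the solution is $0,1,0,2,0,4,0,8,\dots$, which does not satisfy $|x_n|\to\infty$. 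So this sub-case of the lemma is itself misstated, and your otherwise sound computation actually exposes the error; the honest conclusion of your proof is the corrected statement (unboundedness), and you should flag the discrepancy rather than assert the false limit.
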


\begin{propo}\label{corolgk0} Consider the equation  (\ref{gk}) being
$g$ a continuous function  defined in an open set
$\mathcal{U}\subseteq\R$. Assume  that $0\in\mathcal{U}$ and
$|g(0)|<1$, then $u=0$ is an attractor for the system
$u_{n+1}=u_ng(u_n)$, and for all initial condition
$x_0,\ldots,x_{k-1}$ such that $0\neq u_0=x_0\ldots x_{k-1}$ is in
the basin of attraction of $\{u=0\}$ we have
$\lim\limits_{n\to\infty}x_n=0$.
\end{propo}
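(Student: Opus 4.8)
The plan is to reduce the $k$-th order recurrence to $k$ scalar products driven by the attracting fiber $\{u=0\}$, and then invoke Proposition~\ref{propoprob1attractor}. First I would check the key algebraic identity: writing $u_n=x_nx_{n+1}\cdots x_{n+k-1}$ and using $x_{n+k}=x_ng(u_n)$, a one-line telescoping gives
\begin{equation*}
u_{n+1}=x_{n+1}\cdots x_{n+k-1}\,x_{n+k}=x_{n+1}\cdots x_{n+k-1}\,x_ng(u_n)=u_ng(u_n)=:\phi(u_n).
\end{equation*}
Since $\phi(0)=0$ and $\phi'(0)=g(0)$ with $|g(0)|<1$, the point $u=0$ is a hyperbolic attractor of $u_{n+1}=\phi(u_n)$, which establishes the first assertion; and because $u_0$ is assumed to lie in its basin of attraction, $u_n\to 0$.

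Next I would split the orbit according to residues modulo $k$. Fixing $i\in\{0,\dots,k-1\}$ and setting $z_m=x_{i+mk}$, $v_m=u_{i+mk}$, the relation $x_{n+k}=x_ng(u_n)$ becomes
\begin{equation*}
\left\{
\begin{array}{rl}
z_{m+1}&=g(v_m)\,z_m,\\
v_{m+1}&=\phi^k(v_m),
\end{array}
\right.
\end{equation*}
which is exactly a system of the form (\ref{problema1}) with $f_0\equiv 0$, $f_1=g$ and $\phi$ replaced by $\phi^k$. Here $v_*=0$ is an attractor of $\phi^k$ and $|f_1(0)|=|g(0)|<1$, so Proposition~\ref{propoprob1attractor} yields $\lim_{m\to\infty}(z_m,v_m)=(f_0(0)/(1-f_1(0)),0)=(0,0)$. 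In particular $x_{i+mk}\to 0$ as $m\to\infty$ for every $i\in\{0,\dots,k-1\}$, and therefore $x_n\to 0$.

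The only point requiring care is verifying the hypothesis of Proposition~\ref{propoprob1attractor}, namely that $v_0=u_i$ belongs to the basin of attraction of $0$ for $\phi^k$. This is immediate from the first step: since $u_0$ lies in the basin of $0$ for $\phi$ we have $u_n\to 0$, so in particular each subsequence $\phi^{km}(u_i)=u_{i+mk}\to 0$. I expect this bookkeeping --- transferring the basin condition from $\phi$ to $\phi^k$ and keeping track of all $k$ residue classes simultaneously --- to be the only mildly delicate part; the analytic content is entirely contained in Proposition~\ref{propoprob1attractor}, whose proof here specializes (since $f_0\equiv 0$) to the elementary fact that once $|g(v_m)|\le N<1$ the product $\prod_m g(v_m)$ forces $z_m\to 0$ geometrically.
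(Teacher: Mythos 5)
Your proposal is correct and follows essentially the same route as the paper: rewrite the recurrence as $u_{n+1}=u_ng(u_n)$ with $u_n=x_n\cdots x_{n+k-1}$, split the orbit into the $k$ residue classes modulo $k$ to obtain $k$ systems of type (\ref{problema1}) with $f_0\equiv 0$, $f_1=g$ and fiber map $\phi^k$, and apply Proposition~\ref{propoprob1attractor} to each. Your explicit verification that each $u_i$ lies in the basin of attraction of $0$ for $\phi^k$ (via the subsequences $u_{i+mk}\to 0$) is a small point the paper leaves implicit, but it is the same argument.
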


\begin{proof} Multiplying both sides of (\ref{gk}) by $x_{n+k-1}\cdots x_{n+1}$
and setting $u_n=x_n x_{n+1}\cdots x_{n+k-1}$, we obtain that
equation (\ref{gk}) can be written as
$$
\left\{
  \begin{array}{rl}
x_{n+k-1}\cdots x_{n+1}x_n&=u_n,\\
u_{n+1}&=u_ng(u_n).
  \end{array}
\right.
$$
Hence, we have $x_{n+k}\cdots x_{n+1}=u_{n+1}$ and $x_{n+k-1}\cdots
x_n=u_n$. Dividing these expressions we have
$$
x_{n+k}=g(u_n)x_n,
$$
 From the above equation, and after renaming $z_{i,n+1}=x_{n+k}$ and $z_{i,n}=x_n$ we get  $k$
systems of type (\ref{problema1}) associated to each initial
condition $(x_i,u_i)$ for $i=0,\ldots,k-1$.
\begin{equation}\label{nousistemak30}
\left\{
  \begin{array}{rl}
z_{i,n+1}&=g(v_{i,n})z_{i,n},\\
v_{i,n+1}&=\varphi(v_{i,n}),
  \end{array}
\right.
\end{equation}
where $\varphi(v)=\phi^k(v)$ and $\phi(v)=vg(v)$. The result follows
because each of these systems are under the hypothesis of
Proposition \ref{propoprob1attractor}.
\end{proof}

\begin{propo}\label{corolgk02}
Consider the equation  (\ref{gk}) being  $g$ a $\mathcal{C}^1$
function defined in an open set $\mathcal{U}\subseteq\R$. Let
$u_a\in\mathcal{U}$ be a hyperbolic attractor of $\phi(u)=ug(u)$.
Then, for all initial condition $(x_0,\ldots,x_{k-1})$ such that
$u_a\neq u_0=x_0\cdots x_{k-1}$ is in the basin of attraction of
$\{u=u_a\}$ we have that:
\begin{enumerate}
  \item[(a)] If $g(u_a)=1$, then the solution of (\ref{gk})
tends  to  a $k$-periodic orbit.
  \item[(b)] If $u_a=0$ and $g(0)=-1$, then the solution $\{x_n\}$ tends to a
$2k$-periodic orbit.
\end{enumerate}
In both cases, the period is not necessarily minimal.
\end{propo}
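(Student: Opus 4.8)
The plan is to reduce this higher-order result to the already-proven Corollary \ref{corolaridiff}, exactly as the proof of Proposition \ref{corolgk0} reduced to Proposition \ref{propoprob1attractor}. First I would recall the reduction carried out in the proof of Proposition \ref{corolgk0}: multiplying both sides of (\ref{gk}) by $x_{n+k-1}\cdots x_{n+1}$ and setting $u_n=x_n x_{n+1}\cdots x_{n+k-1}$ yields $x_{n+k}=g(u_n)x_n$ together with $u_{n+1}=u_n g(u_n)=\phi(u_n)$. Fixing a residue class $i\in\{0,\ldots,k-1\}$ and renaming $z_{i,n}=x_{nk+i}$, $v_{i,n}=u_{nk+i}$, one obtains the $k$ decoupled systems (\ref{nousistemak30}) of type (\ref{problema1}), with $f_1(v)=g(v)$, $f_0\equiv 0$, and $\phi$ replaced by $\varphi=\phi^k$. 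The key observation is that since $g\in\mathcal{C}^1$, all the maps $f_0\equiv 0$, $f_1=g$ and $\varphi=\phi^k$ are $\mathcal{C}^1$ in a neighborhood of $u_a$, so each system (\ref{nousistemak30}) is a $\mathcal{C}^1$ system to which Corollary \ref{corolaridiff} can be applied, provided $u_a$ is a hyperbolic attractor of $\varphi$.

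The first step is therefore to verify that $u_a$ is a hyperbolic attractor of $\varphi=\phi^k$. By hypothesis $u_a$ is a hyperbolic attractor of $\phi$, i.e.\ $|\phi'(u_a)|<1$; by the chain rule $\varphi'(u_a)=(\phi^k)'(u_a)=\bigl(\phi'(u_a)\bigr)^k$, so $|\varphi'(u_a)|=|\phi'(u_a)|^k<1$, and $u_a$ is a hyperbolic fixed point and hence a hyperbolic attractor of $\varphi$. Moreover, since the orbit $\{u_n\}$ of $\phi$ with $u_0=x_0\cdots x_{k-1}$ converges to $u_a$, each of its subsequences $\{v_{i,n}\}=\{u_{nk+i}\}$ is an orbit of $\varphi$ converging to $u_a$, so $v_{i,0}=u_i$ lies in the basin of attraction of $u_a$ for $\varphi$ for every $i$. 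Thus the hypotheses of Corollary \ref{corolaridiff} are met for each of the $k$ systems.

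For part (a), when $g(u_a)=1$ we have $f_1(u_a)=g(u_a)=1$ and $f_0\equiv 0$, so $f_0(u_a)=0$; Corollary \ref{corolaridiff}(a) gives that each sequence $\{z_{i,n}\}=\{x_{nk+i}\}$ converges to a limit $\ell_i:=\ell(x_i,u_i)$. Hence $x_n$ splits into $k$ convergent subsequences indexed by the residue $n\bmod k$, which means $\{x_n\}$ tends to the $k$-periodic orbit $(\ell_0,\ell_1,\ldots,\ell_{k-1})$, not necessarily of minimal period $k$. For part (b), when $u_a=0$ and $g(0)=-1$ we have $f_1(u_a)=-1$; here I would apply Corollary \ref{corolaridiff}(b) to each of the $k$ systems, which shows that each subsequence $\{z_{i,2m}\}$ and $\{z_{i,2m+1}\}$ converges (with limits $\ell$ and $f_0(0)-\ell=-\ell$ since $f_0\equiv 0$). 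Splitting $x_n$ according to $n\bmod k$ and then, within each class, according to the parity of the block index, produces $2k$ convergent subsequences, so $\{x_n\}$ tends to a $2k$-periodic orbit. The only delicate point is the bookkeeping in part (b): one must check that the index $n=mk+i$ alternates correctly so that passing through the doubled map $\varphi^2=\phi^{2k}$ genuinely partitions $\{x_n\}$ into $2k$ convergent subsequences with the claimed sign pattern $\ell_i,\,-\ell_i$; this is the main obstacle, but it is purely combinatorial and follows from carefully tracking residues modulo $2k$.
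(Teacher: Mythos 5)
Your proposal is correct and follows essentially the same route as the paper's own proof: both reduce equation~(\ref{gk}) to the $k$ systems~(\ref{nousistemak30}) with $f_0\equiv 0$, $f_1=g$, $\varphi=\phi^k$, and apply Corollary~\ref{corolaridiff} to each, splitting $\{x_n\}$ into $k$ (resp.\ $2k$) convergent subsequences. Your write-up is in fact somewhat more explicit than the paper's, since you verify via the chain rule that $u_a$ is a hyperbolic attractor of $\varphi=\phi^k$ and track the residues modulo $2k$ cleanly, whereas the paper leaves the first point implicit and states the bookkeeping with a notational slip.
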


\begin{proof}
We consider again the systems (\ref{nousistemak30}) which, under the
current  hypotheses, also satisfy the ones of Corollary
\ref{corolaridiff}. Hence if $g(u_a)=1$, there exists $\ell_i(x_i)$
for $i=0,\ldots,k-1$ such that each sequence $\{z_{i,n}\}$ satisfies
$\lim\limits_{n\to\infty}z_{i,n}=\ell_i(x_i)$. If $g(0)=-1$, then
 there exist $\ell_{i,0}(x_i)$ and $\ell_{i,1}(x_i)$
such that $\lim\limits_{j\to\infty}z_{i,2j}=\ell_{i,0}(x_i)$ and
$\lim\limits_{j\to\infty}z_{i,2j+1}=\ell_{i,1}(x_i)$. It is easy to
see that for $i,j\in\{0,1\}$ each subsequence $\{z_{i,2k+j}\}$
corresponds with the subsequence $\{x_{4k+i+2j}\}$, obtaining that
there are $2k$  convergent subsequences of $\{x_n\}$.
\end{proof}

\subsection{Global dynamics of a  type of additive  difference
equations}\label{Aplications3}

In this section we consider the second order difference equations
\begin{equation}\label{s1}
x_{n+2}=-bx_{n+1}+g(x_{n+1}+bx_{n}).
\end{equation}  We call these equations
\emph{additive} ones. Equation (\ref{s1}) has the  associated map
$F(x,y)=(y,-by+g(y+bx))$ which preserves the fibration
$\mathcal{F}=\{y+bx=c,\,c\in\R\}$. So if $u=u^*$ is a fixed point of
$g$, the map preserves the fiber $y+bx=u^*$.

Setting $u_n=x_{n+1}+bx_{n}$, we get
\begin{equation}\label{s13}
\left\{
  \begin{array}{rl}
x_{n+1}&=u_n-bx_n,\\
u_{n+1}&=g(u_{n}),
  \end{array}
\right.
\end{equation}  which is a system of type (\ref{problema1})
with  $f_0(u)=u$; $f_1(u)\equiv -b$; and $\phi(u)=g(u)$.

It is easy to observe that if $|b|>1$, or $b=-1$ and $u^*\neq 0$
then there are iterates of map $F$ on the invariant fiber $y+bx=u^*$
which are unbounded, and therefore these cases are out of our
 scope. In fact it is straightforward to obtain the following
 result.

\begin{lem}
Consider the  equation  (\ref{s1}) where $g$ is a continuous
function  defined in an open set $\mathcal{U}\subseteq\R$. Let
$u^*\in\mathcal{U}$ be a fixed attracting point $u_{n+1}=g(u_n)$.
Then, for each initial condition $x_0,x_1$ such that $x_1+b
x_0=u^*$, we have
  $\lim\limits_{n\to\infty}x_{n}=u^*/(1+b)$ if $|b|<1$; the orbits
  are $2$-periodic if $b=1$ ;  are fixed points if
  $b=-1$ and $u^*=0$; and there are unbounded orbits if $b=-1$ and $u^*\neq 0$, or $|b|>1$.
\end{lem}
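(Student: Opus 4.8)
The plan is to exploit the hypothesis $x_1+bx_0=u^*$ to collapse the planar dynamics onto the invariant fiber and reduce everything to a single first-order affine recurrence. Writing $u_n=x_{n+1}+bx_n$ as in system~(\ref{s13}), the assumption $u_0=x_1+bx_0=u^*$ together with the fact that $u^*$ is a fixed point of $g$ (being an attracting fixed point of $u_{n+1}=g(u_n)$) forces $u_n=u^*$ for every $n\ge 0$. Substituting $u_n\equiv u^*$ into the first equation of~(\ref{s13}) yields the scalar recurrence
$$x_{n+1}=-b\,x_n+u^*,$$
so the whole statement reduces to the elementary analysis of this affine map on $\R$.

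Next I would solve this recurrence in closed form. When $b\neq-1$ it has the unique fixed point $x_*=u^*/(1+b)$, and the substitution $y_n=x_n-x_*$ gives $y_{n+1}=-b\,y_n$, hence
$$x_n=x_*+(-b)^n\,(x_0-x_*),\qquad n\ge 0.$$
The stated cases then follow by inspection: if $|b|<1$ then $(-b)^n\to 0$ and $x_n\to x_*=u^*/(1+b)$; if $|b|>1$ then for any $x_0\neq x_*$ the factor $(-b)^n$ forces $|x_n|\to\infty$, so unbounded orbits exist; and if $b=1$ one computes directly $x_{n+2}=-x_{n+1}+u^*=x_n$, so every orbit is $2$-periodic. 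The value $b=-1$ must be handled separately, since there the fixed-point formula degenerates: the recurrence becomes $x_{n+1}=x_n+u^*$, with solution $x_n=x_0+n\,u^*$, which is constant (a fiber of fixed points) when $u^*=0$ and a strictly increasing or decreasing arithmetic progression, hence unbounded, when $u^*\neq 0$.

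There is essentially no hard step: once the reduction $u_n\equiv u^*$ is observed, what remains is a standard discussion of an affine scalar recurrence. The only points requiring care are bookkeeping ones. First, the case $b=-1$ cannot be treated through the conjugation $y_n=x_n-x_*$ and must be read off directly from $x_n=x_0+n\,u^*$. Second, in the cases $|b|>1$ and $b=-1,\,u^*\neq 0$ I would state the conclusion as the \emph{existence} of unbounded orbits rather than unboundedness of all orbits, since the single exceptional initial condition $x_0=x_*$ still produces a constant (bounded) orbit; this matches the wording of the statement.
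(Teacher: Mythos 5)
Your proof is correct and is precisely the ``straightforward'' argument the paper has in mind: the paper states this lemma without proof, immediately after observing that the map preserves the fiber $y+bx=u^*$, so the intended route is exactly yours --- restrict to that fiber, obtain the affine recurrence $x_{n+1}=-b\,x_n+u^*$, and solve it in closed form, treating $b=-1$ separately. Your bookkeeping remarks (the degenerate fixed-point formula at $b=-1$, and reading the $|b|>1$ conclusion as mere \emph{existence} of unbounded orbits because $x_0=x_*$ gives a constant orbit) are the right points of care and match the statement's wording.
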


 For the rest of initial conditions, the
dynamics can be studied using Proposition \ref{propoprob1attractor}
and Corollary \ref{corolaridiff}, obtaining:

\begin{propo}
Consider the equation  (\ref{s1}) being
$g\in\mathcal{C}^1(\mathcal{U})$ function defined in an open set
$\mathcal{U}\subseteq\R$. Let $u^*\in\mathcal{U}$ be a hyperbolic
attracting point of $u_{n+1}=g(u_n)$. Then, for all initial
condition $x_0,x_1$ such that $u_0=x_1+bx_0$ is in the basin of
attraction of $u=u^*$, we have:
\begin{enumerate}
  \item[(a)]  If $|b|<1$, then
  $\lim\limits_{n\to\infty}x_{n}=u^*/(1+b)$.
  \item[(b)] If $b=1$ and system (\ref{s13}) has fast enough
convergence to the limit fiber $\{u=u_*\}$, then  $\{x_n\}$ tends to
a $2$-periodic orbit $\{\ell_0(x_0),\ell_1(x_1)\}$.
  \item[(c)]   If $b=-1$, $u^*=0$ and system (\ref{s13}) has fast enough
convergence to the limit fiber $\{u=0\}$, then there exists
$\ell(x_0)$   such that $\lim\limits_{n\to\infty}x_{n}=\ell(x_0)$.
\end{enumerate}
\end{propo}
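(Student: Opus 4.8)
The plan is to exploit the reduction already carried out above: setting $u_n = x_{n+1} + b x_n$ turns equation (\ref{s1}) into the triangular system (\ref{s13}), which is of the form (\ref{problema1}) with $f_0(u) = u$, $f_1(u) \equiv -b$ and $\phi(u) = g(u)$. Iterating (\ref{s13}) from the initial datum $(x_0, u_0)$, with $u_0 = x_1 + b x_0$, recovers the sequence $\{x_n\}$ of the original recurrence, so every assertion about $\{x_n\}$ follows from the behaviour of the $x$-component of (\ref{s13}). Since $u_0$ lies by hypothesis in the basin of attraction of the attracting fixed point $u^*$ of $\phi = g$, and here $u^*$ plays the role of $u_*$ in the results of Section \ref{main}, the three items reduce to matching the constant value $f_1 \equiv -b$ to the appropriate hypothesis and invoking the corresponding statement.

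For item (a), $|b| < 1$ gives $|f_1(u^*)| = |b| < 1$, so Proposition \ref{propoprob1attractor} applies directly: the fiber $\{u = u^*\}$ carries the global attractor $f_0(u^*)/(1 - f_1(u^*)) = u^*/(1+b)$, whence $\lim_{n\to\infty}(x_n, u_n) = (u^*/(1+b), u^*)$ and in particular $\lim_{n\to\infty} x_n = u^*/(1+b)$.

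For items (b) and (c) we are in the boundary regime $|f_1(u^*)| = 1$. When $b = 1$ we have $f_1 \equiv -1$; since $g \in \mathcal{C}^1(\mathcal{U})$ and $u^*$ is a hyperbolic attractor, Proposition \ref{expans}(c) guarantees that $\phi = g$ is locally contractive at $u^*$, so the extra contractivity assumption of Theorem \ref{propoprob1identitatv2}(b) holds. Combined with the fast enough convergence hypothesis, that theorem yields $\lim_{n\to\infty} x_{2n} = \ell(x_0, u_0) =: \ell_0$ and $\lim_{n\to\infty} x_{2n+1} = f_0(u^*) - \ell(x_0, u_0) = u^* - \ell_0 =: \ell_1$; taking limits in (\ref{s1}) with $u_n \to u^*$ and $g(u^*) = u^*$ shows $\ell_0 + \ell_1 = u^*$, so $\{\ell_0, \ell_1\}$ is indeed a $2$-periodic orbit. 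When $b = -1$ and $u^* = 0$ we have $f_1 \equiv 1$ and $f_0(u^*) = u^* = 0$, so Theorem \ref{propoprob1identitatv2}(a), again using the fast enough convergence hypothesis, provides $\ell(x_0)$ with $\lim_{n\to\infty} x_n = \ell(x_0)$.

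I expect no serious obstacle: once $f_0$, $f_1$ and $\phi$ are identified, each item is a specialization of a result from Section \ref{main}, and the only delicate points are the sign bookkeeping (that $b = 1$ corresponds to $f_1 \equiv -1$ and $b = -1$ to $f_1 \equiv 1$) and the verification of local contractivity in (b). In fact, since $f_0$, $f_1$ and $\phi$ are all $\mathcal{C}^1$ here, one could dispense with the explicit fast enough convergence hypothesis in (b) and (c) and instead invoke Corollary \ref{corolaridiff}, which both supplies that property automatically from hyperbolicity and yields the same limits.
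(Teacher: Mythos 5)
Your proposal is correct, and for part (a) it coincides exactly with the paper: reduce to system (\ref{s13}) with $f_0(u)=u$, $f_1\equiv -b$, $\phi=g$, and apply Proposition \ref{propoprob1attractor}. For parts (b) and (c) there is a small but genuine divergence in which result of Section \ref{main} is invoked. The paper's proof applies Corollary \ref{corolaridiff} directly (legitimate, since $f_0$, $f_1$ and $\phi=g$ are $\mathcal{C}^1$ and $u^*$ is a hyperbolic attractor), which means the paper never actually uses the ``fast enough convergence'' hypotheses written into statements (b) and (c) --- under its route they are redundant, being automatically supplied by hyperbolicity. You instead take those stated hypotheses at face value and invoke Theorem \ref{propoprob1identitatv2} itself, supplying the extra ingredient that part (b) of the theorem demands --- local contractivity of $\phi$ at $u^*$ --- via Proposition \ref{expans}(c), exactly as hyperbolicity and smoothness permit. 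Both routes are valid; yours is more faithful to the hypotheses as stated (and in (c) would survive with hyperbolicity weakened to mere attraction plus fast enough convergence, with local contractivity from Proposition~\ref{expans} where needed), while the paper's is shorter and shows the stated hypotheses can be dropped. Your closing observation that one could ``dispense with the explicit fast enough convergence hypothesis and instead invoke Corollary \ref{corolaridiff}'' is precisely the paper's own proof. Your verification that the limits $\ell_0$, $\ell_1$ in (b) satisfy $\ell_0+\ell_1=u^*$ and form a genuine $2$-periodic orbit of (\ref{s1}) is a detail the paper leaves implicit, and it is a worthwhile addition.
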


\begin{proof} (a) If $|b|<1$, then system (\ref{s13}) is under the hypothesis of
Proposition \ref{propoprob1attractor}, hence
$$
\lim\limits_{n\to\infty}
x_{n}=\frac{f_0(u_*)}{1-f_1(u_*)}=\frac{u^*}{1+b}.
$$

To prove statements (b) and (c), observe that by applying Corollary
\ref{corolaridiff} to system (\ref{s13}), we obtain that if $b=1$,
then  for each $i=0,1$ there exists $\ell_i(x_i)$ such that
$\lim\limits_{k\to\infty} x_{2k+i}=\ell_i(x_i)$. And if $b=-1$ and
$u^*=0$, there exists $\ell(x_0)$ and $\ell_1$ such that
$\lim\limits_{n\to\infty}x_{n}=\ell(x_0)$.
\end{proof}

\medskip

 \noindent \textsl{Example J.}
Consider the equation
\begin{equation}\label{sumativak}
    x_{n+k}=x_n+f\left(\sum_{i=n}^{n+k-1} x_i\right).
\end{equation}
Adding the term $\sum_{i=n+1}^{n+k-1} x_i$ in  both sides; setting
$u_n=x_n+x_{n+1}+\cdots+x_{n+k-1}$, and after renaming
$z_{i,n+1}=x_{n+k}$ and $z_{i,n}=x_n$ we get  $k$ systems of type
(\ref{problema1}) associated to each initial condition $(x_i,u_i)$
for $i=0,\ldots,k-1$.
$$
\left\{
  \begin{array}{rl}
z_{i,n+1}&=f(u_n)+z_{i_n},\\
v_{i,n+1}&=\varphi(v_{i,n}),
  \end{array}
\right.
$$
where $\varphi(v)=\phi^k(v)$ and $\phi(v)=v+f(v)$. If $u_*$ is an
attractor of $u_{n+1}=u_n+f(u_n)$ and the system has the fast enough
convergence property to the limit fiber $\{u=u_*\}$. The dynamics on
the basin of attraction of this fiber can be studied using Theorem
\ref{propoprob1identitatv2}.

\medskip

\noindent \textsl{Example K.} In an analogous way as in all the
previous examples, by adding $ax_{n+1}$ in both sides of equation
$$
    x_{n+2}=ax_n+(1-a)x_{n+1}+f(x_{n+1}+ax_n),
$$
and setting $u_n=x_{n+1}+ax_n$, we get that the above equation can
be studied using via the system of type (\ref{problema1})
$$
\left\{
  \begin{array}{rl}
x_{n+1}&=u_n-ax_n,\\
u_{n+1}&=u_n+f(u_n).
  \end{array}
\right.
$$


\begin{thebibliography}{9}

\bibitem{A} M.~Aloqeili. \textsl{Dynamics of a rational difference
equation.} Appl. Math. Comput. 176 (2006), 768--774.

\bibitem{ACL1} A.M.~Amleh, E.~Camouzis, G.~Ladas. \textsl{On the dynamics of a rational difference equation, Part I.}
J. Difference Equ. Appl. 3 (2008), 1--35.

\bibitem{ACL2} A.M.~Amleh, E.~Camouzis, G.~Ladas. \textsl{On the dynamics of a rational difference equation, Part II.}
J. Difference Equ. Appl. 3 (2008), 195--225.

\bibitem{ASM} A.~Andruch-Sobilo, M. Migda. \textsl{On rational recursive sequence $x_{n+1}=\frac{a x_{n-1}}{b+c x_nx_{n-1}}$.}
Opuscula Math. 26 (2006), 387--394.

\bibitem{XX}  X.M.~Bai, H.M.~Li, X.S.~Yang. \textsl{Some results on cascade discrete--time systems}.
Discrete Dynamics in Nature and Society (2006) Article ID 14631, 8
pages.

\bibitem{BL} I.~Bajo, E.~Liz. \textsl{Global behaviour of a second--order nonlinear difference equation.}
J. Difference Equ. Appl. 17 (2011), 1471--1486.

\bibitem{BV} L.~Barreira, C.~Valls. Dynamical Systems. Springer,
Berlin 2013.

\bibitem{CGM06} A.~Cima, A.~ Gasull, V.~Ma\~{n}osa. \textsl{Dynamics of some rational
discrete dynamical systems via invariants}. Int. J. Bifurcations and
Chaos 16 (2006), 631--645.

\bibitem{CGM97} A.~Cima, A.~Gasull, F.~Ma\~{n}osas.
\textsl{Some remarks on continuous and discrete Markus-Yamabe
Problems}.  A conference on Polynomial Maps and the Jacobian
Conjecture, 1997. Special issue in honour of the mathematical work
of Gary Meisters.

\texttt{http://www.cs.ru.nl/$\sim$hubbers/docs/meisters97.ps}

\bibitem{C} C.~\c{C}inar, \textsl{On the solutions of the difference equation $x_{n+1}=\frac{ x_{n-1}}{-1+a x_nx_{n-1}}$.}
Appl. Math. Comput. 158 (2004), 793--797.

\bibitem{GH} J. Guckenheimer, P. Holmes. Nonlinear Oscilations,
Dynamical Systems, and Bifurcations of Vector Fields. Springer,
Berlin 1983.

\bibitem{Ibr} T.F. Ibrahim. \textsl{On the third order rational difference equation
$x_{n+1}=\frac{x_nx_{n-2}}{x_{n-1}(a+bx_nx_{n-2})}$}. Int. J.
Contemp. Math. Sciences, 4 (2009), 1321--1334.

\bibitem{IB} A.~Iggidr, M.~Bensoubaya. \textsl{New results on
the stability of discrete-time systems and applications to control
problems}. J. Math. Anal. Appl. 219 (1998), 392--414.

\bibitem{Ladas} M.R.S.~Kulenovic, G.~Ladas Dynamics of Second Order Rational Difference
Equations. Chapman and Hall/CRC, Boca Raton FL, 2001.


\bibitem{Stev0} S.~Stevi\'c. \textsl{More on a rational recurrence relation.}
Appl. Math. E-Notes 4 (2004), 80--85.

\bibitem{Stev} S.~Stevi\'c. \textsl{Asymptotic behavior of a class of
nonlinear difference equations.} Discrete Dynamics in Nature and
Society  (2006) Article ID 47156, 10 pages.


\bibitem{PZ} G.~P\'olya, G.~Szeg\"o. Problems and Theorems in
Analysis I. Springer, Berlin 1998.


\bibitem{X} X.S.~Yang. \textsl{Existence of unbounded solutions of time varying systems and failure of global asymptotic stability in discrete--time casacade
systems}. IMA J. Math. Control and Information 22 (2005), 80--87.


\end{thebibliography}
\end{document}